\newcommand{\dist}{{\mbox{dist}}}
\newcommand{\NP}{{\sf NP}}
\newcounter{ctrclaim}[theorem]
\newcounter{ctrcase}[theorem]
\title{Colouring Graphs of Bounded Diameter in the Absence of Small Cycles\thanks{Research supported by the Leverhulme Trust (RPG-2016-258). An extended abstract~\cite{MPS21} of the paper will appear in the proceedings of CIAC 2021.}}
\author{Barnaby Martin\inst{1} \and
Dani\"el Paulusma\inst{2}\orcidID{0000-0001-5945-9287}
\and Siani Smith\inst{1}}
\institute{Department of Computer Science, Durham University, UK
\email{\{barnaby.d.martin,daniel.paulusma,siani.smith\}@durham.ac.uk}}
\authorrunning{B. Martin, D. Paulusma and S. Smith}
\titlerunning{Colouring Graphs of Bounded Diameter in the Absence of Small Cycles}
\begin{document}
\maketitle

\begin{abstract}
For  $k\geq 1$, a $k$-colouring $c$ of $G$ is a mapping from $V(G)$ to $\{1,2,\ldots,k\}$ such that $c(u)\neq c(v)$ for any two non-adjacent vertices $u$ 
and~$v$. The {\sc $k$-Colouring} problem is to decide if a graph $G$ has a $k$-colouring. For a family of graphs ${\cal H}$, a graph~$G$ is ${\cal H}$-free if $G$ does not contain any graph from ${\cal H}$ as an induced subgraph. Let $C_s$ be the $s$-vertex cycle. In previous work (MFCS 2019) we examined the effect of bounding the diameter on the complexity of {\sc $3$-Colouring} for $(C_3,\ldots,C_s)$-free graphs and $H$-free graphs where $H$ is some polyad. Here, we prove for certain small values of $s$ that {\sc $3$-Colouring} is polynomial-time solvable for $C_s$-free graphs of diameter~$2$  and $(C_4,C_s)$-free graphs of diameter~$2$. In fact, our results hold for the more general problem {\sc List $3$-Colouring}.
We complement these results with some hardness result for diameter~$4$.
\end{abstract}

\section{Introduction}

Graph colouring is a well-studied topic in Computer Science due to its wide range of applications. A {\it $k$-colouring} of a graph $G$ is a mapping $c:V(G)\to \{1,\ldots,k\}$ that assigns each vertex $u$ a {\it colour} $c(u)$ in such a way that $c(u)\neq c(v)$ for any two adjacent vertices $u$ and $v$ of $G$. The aim is to find the smallest value of $k$ (also called the {\it chromatic number}) such that $G$ has a $k$-colouring. 
The corresponding decision problem is called {\sc Colouring}, or {\sc $k$-Colouring} if $k$ is fixed, that is, not part of the input. 
As even {\sc $3$-Colouring} is \NP-complete~\cite{Lo73}, {\sc $k$-Colouring} and {\sc Colouring}
 have been studied for many special graph classes, as surveyed in, for example, \cite{Al93,C14,GJPS,JT95,KTV99,Pa15,RS04b,Tu97}. This holds in particular for  {\it hereditary} classes of graphs, which are the classes of graphs closed under vertex deletion. 
 
 It is well known and not difficult to see that a class of graphs is hereditary if and only if it can be characterized by a unique set ${\cal F}_{\cal G}$ of minimal forbidden induced subgraphs. In particular, a graph $G$ is {\it $H$-free} for some graph~$H$ if $G$ does not contain $H$ as an {\it induced} subgraph. The latter means that we cannot modify $G$ into $H$ by a sequence of vertex deletions. For a set of graphs $\{H_1,\ldots,H_p\}$, a graph $G$ is {\it $(H_1,\ldots,H_p)$-free} if $G$ is $H_i$-free for every $i\in \{1,\ldots,p\}$. 

We continue a long-term study on the complexity of {\sc $3$-Colouring} for special graph classes.
Let $C_t$ and $P_t$ be the cycle and path, respectively, on $t$ vertices.
The complexity of {\sc $3$-Colouring} for $H$-free graphs has not yet been classified; in particular this is still
is open for $P_t$-free graphs for every $t\geq 8$, whereas the case $t=7$ is polynomial~\cite{BCMSSZ18}. For $t\geq 3$, let $C_{>t}=\{C_{t+1},C_{t+2},\ldots\}$. Note that for $t\geq 2$, the class of $P_t$-free graphs is a subclass of $C_{>t}$-free graphs.
Recently, Pilipczuk, Pilipczuk and Rz{\k{a}}\.{z}ewski~\cite{PPR} gave for every $t\geq 3$,
a quasi-polynomial-time algorithm for {\sc $3$-Colouring} on ${C}_{> t}$-free graphs. 
Rojas and Stein~\cite{RS} proved in another recent paper that for every odd integer $t\geq 9$, {\sc $3$-Colouring} is polynomial-time solvable for 
$({\cal C}^{odd}_{<t-3},P_t)$-free graphs, where ${\cal C}^{odd}_{<t}$ is the set of all odd cycles on less than $t$ vertices.
This complements a result from~\cite{GPS14}, which implies that for every $t\geq 1$, {\sc $3$-Colouring}, or more general {\sc List $3$-Colouring} (defined later), is polynomial-time solvable for $(C_4,P_t)$-free graphs (see also~\cite{LM17}).
 
The graph classes in this paper are only partially characterized by forbidden induced subgraphs: we also restrict the diameter.
The {\it distance} $\dist(u,v)$ between two vertices~$u$ and~$v$ in a graph~$G$ is the length (number of edges) of a shortest path between them. The {\it diameter} of a graph~$G$ is the maximum distance over all pairs of vertices in $G$.
Note that the $n$-vertex path~$P_n$ has diameter~$n-1$, but by removing an internal vertex the diameter becomes infinite.
Hence, for every integer~$d\geq 2$, the class of graphs of diameter at most~$d$ is not hereditary (whereas if $d=1$ we obtain the class of complete graphs, which is hereditary).

For every $d\geq 3$, the $3$-{\sc Colouring} problem for graphs of diameter at most~$d$ is \NP-complete, as shown by  Mertzios and Spirakis~\cite{MS16} who gave a highly non-trivial \NP-hardness construction for the case where $d=3$. In fact they proved that $3$-{\sc Colouring} is \NP-complete even for $C_3$-free graphs of diameter~$3$ and radius~$2$. The complexity of {\sc $3$-Colouring} for the class of all graphs of diameter~$2$ has been posed as an open problem in several papers~\cite{BKM12,BFGP13,MPS19,MS16,Pa15}. 

On the positive side, Mertzios and Spirakis~\cite{MS16} gave a subexponential-time algorithm for {\sc $3$-Colouring} on graphs of diameter~$2$. Moreover, as we discuss below, $3$-{\sc Colouring} is polynomial-time solvable for several subclasses of diameter~$2$.
In order to explain this, we need some terminology.

A graph $G$ has an {\it articulation neighbourhood} if $G-(N(v)\cup \{v\})$ is disconnected for some $v\in V(G)$. The neighbourhoods $N(u)$ and $N(v)$ of two distinct (and non-adjacent) vertices $u$ and $v$ are {\it nested} if $N(u)\subseteq N(v)$. The graph $K_{1,r}$ denotes the $(r+1)$-vertex {\it star}, that is, the graph with vertices $x,y_1,\ldots,y_r$ and edges $xy_i$ for $i=1,\ldots,r$.
The {\it subdivision} of an edge $uw$ in a graph removes $uw$ and replaces it with a new vertex $v$ and edges $uv$, $vw$. 
We let $K_{1,r}^\ell$ be the {\it $\ell$-subdivided star}, which is 
obtained from $K_{1,r}$ by subdividing {\it one} edge exactly $\ell$ times.
A {\it polyad} is a tree where exactly one vertex has degree at least~$3$.
The graph~$S_{h,i,j}$, for $1\leq h\leq i\leq j$, is the tree with one vertex~$x$ of degree~$3$ and exactly three leaves, which are of distance~$h$,~$i$ and~$j$ from~$x$, respectively.
Note that $S_{1,1,1}=K_{1,3}$. 
The {\it diamond} is obtained from the $4$-vertex complete graph by deleting an edge.

The {\sc $3$-Colouring} problem is polynomial-time solvable for:

\begin{itemize}
\item[$\bullet$] diamond-free graphs of diameter~$2$ with an articulation neighbourhood but without nested neighbourhoods~\cite{MS16};\\[-9pt]
\item[$\bullet$]  $(C_3,C_4)$-free graphs of diameter~$2$~\cite{MPS19};\\[-9pt]
\item[$\bullet$]  $K^2_{1,r}$-free graphs of diameter~$2$, for every $r\geq 1$~\cite{MPS19}; and\\[-9pt]
\item[$\bullet$]  $S_{1,2,2}$-free graphs of diameter~$2$~\cite{MPS19}.
\end{itemize}
\noindent
It follows from results in~\cite{EHK98,Ho81,LK07} that without the diameter-$2$ condition, {\sc $3$-Colouring} is \NP-complete again in each of the above cases; in particular {\sc $3$-Colouring} is \NP-complete for ${\cal C}$-free graphs for any finite set ${\cal C}$ of cycles.

\subsection*{Our Results}

We aim to increase our understanding of the complexity of {\sc $3$-Colouring} for graphs of diameter~$2$. 
In~\cite{MPS19} we mainly considered {\sc $3$-Colouring} for graphs of diameter~$2$ with some forbidden induced subdivided star. In this paper, we continue this study by focussing on {\sc $3$-Colouring} for $C_s$-free or $(C_s,C_t)$-free graphs of diameter~$2$ for small values of $s$ and $t$; in particular for the case where $s=4$ (cf. the aforementioned polynomial-time result for $(C_4,P_t)$-free graphs).
In fact we prove our results for a more general problem, namely {\sc List $3$-Colouring}, whose complexity for diameter~$2$ is also still open.
 A {\it list assignment} of a graph $G=(V,E)$ is a function $L$ that prescribes a {\it list of admissible colours} $L(u)\subseteq \{1,2,\ldots\}$ to each $u\in V$.
A colouring $c$  {\it respects}~${L}$ if  $c(u)\in L(u)$ for every $u\in V.$ 
For an integer~$k\geq 1$, if $L(u)\subseteq \{1,\ldots,k\}$ for each $u\in V$, then $L$ is a {\it list $k$-assignment}.
The {\sc List $k$-Colouring} problem is to decide if a graph $G$ with an list $k$-assignment $L$ has a colouring that respects $L$. 
If every list is $\{1,\ldots,k\}$, we obtain {\sc $k$-Colouring}.

The following two theorems summarize our main results.

\begin{theorem}\label{t-main}
For $s\in \{5,6\}$, {\sc List $3$-Colouring} is polynomial-time solvable for $C_s$-free graphs of diameter~$2$.
\end{theorem}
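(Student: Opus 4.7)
The plan is to reduce {\sc List $3$-Colouring} on an $n$-vertex $C_s$-free diameter-$2$ instance $(G,L)$ to at most $n^{O(1)}$ instances of {\sc List $2$-Colouring}; the latter is polynomial-time solvable by the standard reduction to {\sc $2$-Sat}. Concretely, I would aim to find, in polynomial time, a ``kernel'' set $S \subseteq V(G)$ such that (i) $S$ admits at most $n^{O(1)}$ list colourings, which we enumerate, and (ii) on every branch, list propagation (deleting from $L(u)$ the colour of every already-coloured neighbour of $u$) shrinks every remaining list to size at most $2$.

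The natural first step is to pick any vertex $v_0$ and branch on its at most three colours; immediately each $u \in N(v_0)$ satisfies $|L(u)| \le 2$. Since $G$ has diameter~$2$, every vertex of $M := V(G) \setminus N[v_0]$ has at least one neighbour in $N(v_0)$, but $M$-vertices may still have lists of size $3$. The main work is therefore to extend $S$ with a carefully chosen set of vertices so that $|L(u)| \le 2$ is forced for every $u \in M$ as well.

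To build this extension I would exploit $C_s$-freeness. Given $u \in M$ and two non-adjacent vertices $w_1, w_2 \in N(v_0) \cap N(u)$, the diameter-$2$ condition forces $w_1, w_2$ to have a common neighbour, and forbidding $C_s$ strongly restricts where that common neighbour may lie and how further vertices can attach to the partial structure $v_0 w_1 u w_2 v_0$. I expect a case analysis to yield a dichotomy: either $u$ already has two neighbours in $N(v_0)$ whose (size-$2$) lists force distinct colours, in which case $|L(u)|$ collapses to at most $2$ by propagation, or else $u$ lies in one of a bounded family of local configurations in which guessing the colour of one additional well-chosen vertex (added to $S$) determines the admissible colours of $u$. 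Since each branching step multiplies the number of instances by at most $3$, a careful accounting of how often we branch should keep the total number of instances polynomial.

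The main obstacle is precisely this structural case analysis: certifying that under $C_s$-freeness and diameter~$2$ no ``cascading'' branching can arise. The argument should differ between $s = 5$ and $s = 6$, since $C_5$-freeness immediately forbids certain induced $4$-paths between $N(v_0)$ and $M$ that the diameter-$2$ condition would otherwise create, whereas $C_6$-freeness only rules out a longer alternating pattern and so requires a more delicate analysis of $2$-paths inside $N(v_0) \cup M$. Once all lists have size at most~$2$ on every branch, the final reduction to {\sc $2$-Sat} completes the algorithm in polynomial total time.
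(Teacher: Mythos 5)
Your overall framework --- branch on a constant-size precoloured set, propagate to shrink all lists to size at most $2$, finish with {\sc $2$-List Colouring} via $2$-Sat --- is exactly the paper's framework (Lemmas~\ref{l-easy} and~\ref{l-easy2}, Theorem~\ref{t-2sat}). But the proposal has a genuine gap: the ``structural case analysis'' that you defer is the entire content of the proof, and the anchor you choose (a single vertex $v_0$) is not the one that makes it work. Branching on the colour of one vertex $v_0$ gives every neighbour a list of size $2$, but size-$2$ lists do not propagate by single-colour deletion, so nothing forces the lists of vertices in $M=V(G)\setminus N[v_0]$ below $3$; your hoped-for dichotomy (``two neighbours in $N(v_0)$ whose size-$2$ lists force distinct colours'') is not something you can certify, and the fallback of ``guessing one additional well-chosen vertex per problematic $u$'' risks a non-constant number of guesses unless you prove a strong structural claim that you do not supply.

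The paper's resolution is to pick a different anchor. It first disposes of the bipartite case (a bipartite diameter-$2$ graph is complete bipartite) and of graphs containing $K_4$, and proves (Lemma~\ref{l-cycle}) that a non-bipartite diameter-$2$ graph contains a triangle or an induced $C_5$. For $s=5$ the anchor $N_0$ is a \emph{triangle}: after precolouring, all three of its vertices have singleton lists, and the key claim is that every vertex at distance $2$ from the triangle has a neighbour adjacent to exactly two triangle vertices (proved by deriving a $K_4$ or an induced $C_5$ otherwise); that neighbour's list collapses to a singleton and then the distance-$2$ vertex's list drops to size $2$. For $s=6$ the anchor is an \emph{induced $C_5$}, and single-colour propagation alone is provably insufficient: the final contradiction requires the diamond propagation rule (Rule~\ref{r-diamond}) applied to two common neighbours of $v$ with lists $\{1,3\}$ and $\{1,2\}$, a rule absent from your proposal. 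So while your high-level plan is the right one, the missing pieces --- the correct choice of $N_0$ as a triangle or induced $C_5$, the specific forbidden-subgraph contradictions, and the diamond rule --- are precisely where the theorem is actually proved.
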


\begin{theorem}\label{t-main2}
For $t\in \{3,5,6,7,8,9\}$, {\sc List $3$-Colouring} is polynomial-time solvable for $(C_4,C_t)$-free graphs of diameter~$2$.
\end{theorem}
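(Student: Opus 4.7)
The plan is to handle each value of $t \in \{3,5,6,7,8,9\}$ separately, leveraging a common structural backbone coming from the hypothesis that the graph $G$ is $C_4$-free and has diameter~$2$: for any two non-adjacent vertices there exists a \emph{unique} common neighbour. Fixing any vertex $v \in V(G)$ and writing $A \df N(v)$ and $B \df V(G) \setminus (A \cup \{v\})$, this forces every vertex of $B$ to have exactly one neighbour in $A$, yielding a projection function $f \colon B \to A$. When $G$ is additionally $C_3$-free, $A$ is an independent dominating set in $G-v$; more generally the induced structure on $A \cup B$ is heavily restricted by the absence of $C_t$. The overall algorithmic target, as is standard for {\sc List $3$-Colouring}, is to reduce every list to size at most~$2$, after which the problem falls to the classical reduction to {\sc $2$-Satisfiability}.

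For $t=3$ the graph is $(C_3,C_4)$-free of diameter~$2$, i.e.\ has girth at least~$5$ and diameter at most~$2$. Such graphs are extremely rigid: by a Moore-bound-style argument they admit essentially a single layered structure around any vertex. After guessing the colour of one well-chosen vertex, propagation through~$f$ shrinks every remaining list to size at most~$2$, and the residual instance is solved in polynomial time.

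For $t \in \{5,6,7,8,9\}$ the approach is the same in spirit but the case analysis is considerably more involved. In each case I would identify a small \emph{pivotal} set of vertices---typically a vertex $v$ of (near-)maximum degree together with one or two carefully chosen associates---and enumerate list-colourings of this set in a constant number of branches. Within each branch, one argues that any vertex $u$ still carrying a list of size~$3$ would, together with previously coloured vertices, complete a forbidden induced~$C_t$ in $G$; this eliminates at least one colour from $L(u)$ and drives every list down to size at most~$2$. The 2-SAT reduction then finishes the algorithm.

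The main technical obstacle, and the reason one must split the argument by the value of $t$, is that the length~$t$ of the forbidden cycle governs how long a propagation chain is needed before a $C_t$ actually materializes in $G$. For small $t$ (e.g.\ $t=5$), very short induced paths between $A$ and $B$ already create a $C_t$, so $f$ is tightly controlled; for larger $t$ (notably $t=8,9$), more complex zig-zag paths between $A$ and $B$ must be examined, and the pivotal set has to be selected with care to rule out all potentially dangerous configurations. This is where I expect the bulk of the case work, and the principal conceptual difficulty, to lie.
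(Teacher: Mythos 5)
Your outline captures the right high-level strategy (branch on a constant-size precoloured set, propagate, use the forbidden cycle to kill third colours, finish with the $2$-list-colouring/2-SAT step), and for $t=3,5,6$ the paper indeed dispatches the cases cheaply ($t=3$ via the Hoffman--Singleton theorem, which says there are only four $(C_3,C_4)$-free graphs of diameter~$2$; $t=5,6$ because $C_4$-freeness is not even needed there). But as written the proposal has a genuine gap: all of the actual work for $t\in\{7,8,9\}$ is deferred ("this is where I expect the bulk of the case work to lie"), and the two concrete choices you do commit to are problematic. First, the "projection function" $f\colon B\to A$ is not well defined: $C_4$-freeness only forces the neighbours of $u\in B$ inside $A=N(v)$ to form a clique (a second neighbour $a_2$ with $a_1a_2\in E$ creates no induced $C_4$), so a vertex of $B$ can have two neighbours in $A$. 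Second, the pivotal set should not be a near-maximum-degree vertex with associates; the paper's choice is an induced short cycle ($C_5$, $C_6$ or $C_7$, obtained after first reducing to the previous theorem in the hierarchy), because it is precisely the distance-$2$ relationships \emph{along that cycle} that let a surviving list of size~$3$ be converted into an induced $C_7$, $C_8$ or $C_9$.

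The more serious missing idea concerns $t=8,9$, where a single round of "precolour $N_0$ and propagate" provably does not suffice and the paper needs a two-phase method. One first runs a full propagation over \emph{all} vertex subsets of size $6$ (resp.\ $7$); the resulting {\tt no}-answers establish that certain precolourings of induced $C_6$'s (resp.\ $C_7$'s) are globally infeasible, and this knowledge is then converted into \emph{new} propagation rules (colour propagation along induced $C_6$'s and $C_7$'s) that are applied in a second round together with the basic rules (including diamond and bull propagation, which your sketch omits but which are essential to several of the contradictions). Without this "learn from the no-answers" refinement, the final contradiction for $t=8,9$ --- that a vertex retaining list $\{1,2,3\}$ forces a $K_4$ or an induced forbidden cycle --- cannot be reached. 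So the proposal is a plausible plan in the same spirit as the paper, but it is not yet a proof: the well-definedness of the structural backbone fails, the pivotal set is chosen differently in a way that is not justified, and the key extra mechanism for the hardest two cases is absent.
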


\noindent
The case $t=3$ in Theorem~\ref{t-main2} directly follows from the Hoffman-Singleton Theorem~\cite{HS60}, which states that there are only four $(C_3,C_4)$-free graphs of diameter~$2$. The cases $t\in \{5,6\}$ immediately follows from Theorem~\ref{t-main}. Hence, apart from proving Theorem~\ref{t-main}, we only need to prove Theorem~\ref{t-main2} for $t\in \{7,8,9\}$. 

We prove Theorem~\ref{t-main} and the case $t=7$ of Theorem~\ref{t-main2} in Section~\ref{s-pro}. As we explain in the same section, all these results follow from the same technique, which is based on a number of (known) propagation rules. We first colour a small number of vertices and then start to apply the propagation rules exhaustively. This will reduce the sizes of the lists of the vertices. The novelty of our approach is the following: we can prove that the diameter-2 property ensures such a widespread reduction that each precolouring changes our instance into an instance of {\sc 2-List Colouring}: the polynomial-solvable variant of {\sc List Colouring} where each list has size at most~$2$~\cite{Ed86} (see also Section~\ref{s-pre}). 

We prove the cases $t=8$ and $t=9$ of Theorem~\ref{t-main2} in Section~\ref{s-pro2} using a refinement of the technique from Section~\ref{s-pro}. We explain this refinement in detail at the start of Section~\ref{s-pro2}. In short, in our branching, we exploit information from earlier obtained no-answers to reduced instances of our original instance $(G,L)$.

We complement Theorems~\ref{t-main} and~\ref{t-main2} by  the following result for diameter~$4$, whose proof can be found in Section~\ref{a-hard}. 

\begin{theorem}\label{t-hard}
For every even integer $t\geq 6$, {\sc $3$-Colouring} is \NP-complete on the class of $(C_4,C_6,\ldots,C_t)$-free graphs of diameter~$4$.
\end{theorem}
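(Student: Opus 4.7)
We reduce in polynomial time from $3$-{\sc Colouring} on graphs of girth greater than~$t$, which is \NP-complete by the classical result of Emden-Weinert, Hougardy and Kreuter~\cite{EHK98}. Given such an input $G$, we build a graph $G'$ that has diameter at most~$4$, contains no induced cycle $C_j$ for any even $j$ with $4 \le j \le t$, and is $3$-colourable if and only if $G$~is.

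The graph $G'$ is obtained by attaching to $G$ a diameter-reducing gadget: to each $v \in V(G)$ we add one or more pendant vertices, and we join these pendants to one or more newly introduced ``hub'' vertices so that every pair of vertices of $G'$ is at distance at most~$4$. Since $G$ has girth greater than~$t$, no short cycle arises inside $G$ itself; the gadget is engineered so that every cycle of~$G'$ using at least one gadget edge has either odd length or length exceeding~$t$. Correctness of the equivalence is then routine: $G$ appears as an induced subgraph of $G'$, which gives the $(\Leftarrow)$ direction; for $(\Rightarrow)$ any $3$-colouring of~$G$ extends to $G'$ by colouring each pendant path greedily and giving each hub a colour distinct from those of its few neighbours---always possible since the gadget, viewed on its own, is freely $3$-colourable given any boundary colouring from~$V(G)$.

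The main obstacle is the explicit design of the gadget. A naive attachment of one pendant $p_v$ per vertex, all joined to a single hub $z$, achieves diameter~$4$ and avoids $C_3$ and~$C_4$, but it creates a cycle $z{-}p_u{-}u{-}\cdots{-}v{-}p_v{-}z$ of length $k+4$ for every $u$--$v$ path of length~$k$ in~$G$; for even $k \le t-4$, this yields a forbidden induced $C_{k+4}$. Overcoming this requires a more delicate construction, for example attaching the hub only to a carefully spaced, independent subset $D \subseteq V(G)$ whose pairwise distances in~$G$ avoid $2, 4, \ldots, t-4$ (such a $D$ can be made dominating thanks to the girth-$>t$ hypothesis on~$G$), or using several hub vertices linked by odd paths so that every cycle through the gadget is either odd or of length greater than~$t$. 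Verifying the diameter-$4$ bound simultaneously with the parity and length constraints on cycles---while keeping the colouring equivalence with~$G$ intact---is the technical crux of the argument, and the difficulty of this balance grows with~$t$.
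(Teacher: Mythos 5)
Your proposal is a plan rather than a proof: the entire burden of Theorem~\ref{t-hard} is the explicit construction of a graph that is simultaneously of diameter~$4$, free of induced $C_4,C_6,\ldots,C_t$, and equivalent to the source instance for $3$-colourability, and this is exactly the part you leave open. You correctly observe that the naive gadget (one pendant per vertex, all pendants joined to a single hub $z$) manufactures induced even cycles of length $k+4$ from paths of length $k$ in $G$, but the repairs you sketch do not survive scrutiny. If the hub $z$ is joined directly to a dominating set $D\subseteq V(G)$, then any vertex $v\notin D$ with two neighbours $d_1,d_2\in D$ yields the induced $C_4$ on $z,d_1,v,d_2$ (note $d_1d_2\notin E$ because $G$ is triangle-free), so $D$ would have to be an efficient dominating set, which arbitrary graphs need not possess; moreover, controlling induced cycles through $z$ requires controlling all induced \emph{paths} between members of $D$ that internally avoid $N(z)$, not merely their pairwise distances, and a girth hypothesis on $G$ gives no handle on the lengths or parities of such paths. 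The ``several hubs linked by odd paths'' variant is equally unspecified. Since the theorem is precisely the claim that such a construction exists, a proof that defers its design cannot be accepted.

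For contrast, the paper sidesteps the difficulty by not starting from an arbitrary high-girth graph at all. It reduces from {\sc Not-All-Equal $3$-Satisfiability} and builds a graph in which a single vertex $z$ is adjacent to every vertex except the clause-triangle vertices; after subdividing each literal--clause edge $p$ times and joining every subdivision vertex to $z$ as well, any induced cycle through $z$ consists of two neighbours of $z$ joined by a path whose internal vertices all lie in a clause triangle, hence has length at most~$5$, while any induced cycle avoiding $z$ is either a clause triangle or must traverse two full subdivision paths and so has length greater than~$p$. Choosing $p>t$ kills every $C_j$ with $4\le j\le t$, and diameter~$4$ is immediate because every vertex is within distance~$2$ of~$z$. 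If you want to salvage your reduction from the high-girth instances of~\cite{EHK98}, you would need an analogous mechanism placing essentially every vertex of the constructed graph into $N(z)$, which is incompatible with keeping the source graph $G$ as an induced subgraph.
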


\noindent
Results of Damerell~\cite{D73} imply that {\sc $3$-Colouring} is polynomial-time solvable for $(C_3,C_4,C_5,C_6)$-free graphs of diameter~$3$ and for $(C_3,\ldots, C_8)$-free graphs of diameter~$4$~\cite{MPS19}.
We were not able to reduce the diameter in Theorem~\ref{t-hard} from $4$ to $3$; see Section~\ref{s-con} for a further discussion, including other open problems.

\section{Preliminaries}\label{s-pre}

In this section we give some more terminology and notation. We also recall some useful result from the literature.

Let $G=(V,E)$ be a graph. A vertex $u\in V$ is {\it dominating} if $u$ is adjacent to every other vertex of $G$. 
For $S\subseteq V$, the graph $G[S]=(S,\{uv\; |\; u,v\in S\; \mbox{and}\; uv\in E\})$ denotes the subgraph of $G$ induced by $S$.  
The {\it neighbourhood} of a vertex $u\in V$ is the set $N(u)=\{v\; |\; uv\in E\}$ and the {\it degree} of $u$ is the size of $N(u)$.
For a set $U\subseteq V$, we write $N(U)=\bigcup_{u\in U}N(u)\setminus U$.

A {\it clique} is a set of pairwise adjacent vertices, and an {\it independent set} is a set of pairwise non-adjacent vertices. A graph is {\it complete} if its vertex set is a clique. We denote the complete graph on $r$ vertices by $K_r$. 
Recall that the diamond is the graph obtained from the $K_4$ after removing an edge. 
The {\it bull} is the graph obtained from a triangle on vertices $x,y,z$ after adding two new vertices $u$ and $v$ and edges $xu$ and $yv$.

Let $G$ be a graph with a list assignment $L$.
If $|L(u)|\leq \ell$ for each $u\in V$, then $L$ is a {\it $\ell$-list assignment}. 
A list $k$-assignment is a $k$-list assignment, but the reverse is not necessarily true.
The {\sc $\ell$-List Colouring} problem is to decide if a graph $G$ with an $\ell$-list assignment $L$ has a colouring that respects $L$. 
We use a known general strategy for obtaining a polynomial-time algorithm for {\sc List $3$-Colouring} on some class~${\cal G}$. That is, we will reduce the input to a polynomial number of instances of $2$-{\sc List Colouring} and use a well-known result due to Edwards.

\begin{theorem}[\cite{Ed86}]\label{t-2sat}
The {\sc $2$-List Colouring} problem is linear-time solvable.
\end{theorem}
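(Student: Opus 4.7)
The plan is to reduce \textsc{$2$-List Colouring} to \textsc{2-Sat} in linear time, and then invoke the classical linear-time algorithm for \textsc{2-Sat} (Aspvall, Plass, Tarjan). Since each list has size at most $2$, each vertex's final colour is essentially a binary choice, which is exactly the kind of constraint \textsc{2-Sat} encodes.

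First I would preprocess $(G,L)$. If some vertex has an empty list, output \emph{no}. If some vertex $v$ has a singleton list $L(v)=\{c\}$, delete $v$ from $G$ and remove $c$ from $L(u)$ for every neighbour $u$ of $v$; iterate until every remaining vertex has a list of size exactly $2$ (or an empty list is produced, in which case output \emph{no}). This phase can be organized to run in $O(|V|+|E|)$ time using a queue of vertices whose lists have just shrunk.

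Next I would build the \textsc{2-Sat} instance. For every remaining vertex $v$ with $L(v)=\{a_v,b_v\}$, introduce a Boolean variable $x_v$ with the convention that $x_v=\mathrm{true}$ means $c(v)=a_v$ and $x_v=\mathrm{false}$ means $c(v)=b_v$. For every edge $uv\in E(G)$ and every pair of literal assignments to $(x_u,x_v)$ that would force $c(u)=c(v)$, add the corresponding 2-clause forbidding that assignment. Concretely, if $a_u=a_v$ add $(\neg x_u\vee \neg x_v)$; if $a_u=b_v$ add $(\neg x_u\vee x_v)$; and symmetrically for $b_u=a_v$ and $b_u=b_v$. Each edge produces at most two clauses (since at most two colour coincidences can occur between two size-$2$ lists), so the total formula has size $O(|V|+|E|)$.

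Correctness is immediate: a satisfying assignment of the formula corresponds exactly to a proper colouring of $G$ respecting $L$, because the clauses rule out precisely the monochromatic edges while each variable's truth value selects a colour from the size-$2$ list. Finally, solving the constructed \textsc{2-Sat} instance in linear time via the Aspvall--Plass--Tarjan algorithm (using the strongly-connected-components characterization of the implication digraph) yields the overall linear running time. The only mild obstacle is the bookkeeping in the preprocessing phase to maintain linear time while lists shrink, but this is a standard amortized argument using a worklist of vertices whose list size has just dropped below $2$.
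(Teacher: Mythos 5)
Your proof is correct: the reduction to \textsc{2-Sat} (preprocess singleton lists by propagation, one Boolean variable per size-$2$ list, at most two clauses per edge, then Aspvall--Plass--Tarjan) is the standard argument for this result. The paper itself gives no proof, importing the theorem from Edwards~\cite{Ed86} (note the label \texttt{t-2sat}), so there is nothing to diverge from; your argument is the canonical one and is complete.
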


\medskip
\noindent
We also need an observation.

\begin{lemma}\label{l-cycle}
Let $G$ be a non-bipartite graph of diameter~$2$. Then $G$ contains a $C_3$ or induced $C_5$.
\end{lemma}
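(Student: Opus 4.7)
\medskip
\noindent
\textbf{Proof plan for Lemma~\ref{l-cycle}.} The plan is to assume $G$ contains no $C_3$ and exhibit an induced $C_5$. Since $G$ is non-bipartite, it contains at least one odd cycle; let $C=v_1v_2\cdots v_{2k+1}v_1$ be a \emph{shortest} odd cycle in $G$. A standard argument shows $C$ is induced: any chord would split $C$ into an even and an odd closed walk, the odd one giving a shorter odd cycle, contradicting minimality. Because $G$ is triangle-free, $|C|\geq 5$. If $|C|=5$ we are immediately done, so I may assume $|C|=2k+1\geq 7$.

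Next, I would use the diameter-$2$ hypothesis on a carefully chosen pair of cycle vertices. Take $v_1$ and $v_4$. Since $C$ is an induced cycle of length $\geq 7$, we have $v_1v_4\notin E(G)$, so by diameter $2$ they share a common neighbour $w\in V(G)$. I claim $w\notin V(C)$: the only neighbours of $v_1$ on $C$ are $v_2$ and $v_{2k+1}$, and the only neighbours of $v_4$ on $C$ are $v_3$ and $v_5$; since $2k+1\geq 7$ these two pairs are disjoint. Hence $w$ is a vertex off the cycle adjacent to both $v_1$ and $v_4$.

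Now I examine the $5$-cycle $v_1v_2v_3v_4w v_1$. Its non-cycle pairs are $v_1v_3$, $v_2v_4$, $wv_2$, and $wv_3$. The first two are non-edges because $C$ is induced. If $wv_2\in E(G)$ then $\{v_1,v_2,w\}$ would form a triangle (using the edges $v_1v_2$, $v_2w$, $wv_1$), contradicting the assumption that $G$ is triangle-free; similarly $wv_3\in E(G)$ would give the triangle $\{v_3,v_4,w\}$. Thus all four non-cycle pairs are non-edges, so $v_1v_2v_3v_4w$ induces a $C_5$ in $G$, completing the proof.

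The argument is quite short and I do not anticipate a real obstacle; the only point that requires care is verifying that the common neighbour $w$ of $v_1$ and $v_4$ lies off the cycle, which is exactly where the assumption $|C|\geq 7$ (as opposed to $|C|=5$) is used.
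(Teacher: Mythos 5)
Your proof is correct and follows essentially the same route as the paper's: take a shortest odd cycle (hence induced), reduce to length at least $7$, pick two cycle vertices at distance $3$, and use diameter $2$ to obtain a common neighbour off the cycle that closes up a $C_3$ or an induced $C_5$. You merely make explicit the case analysis that the paper compresses into the observation that $\{u,v,w,x,y\}$ contains a $C_3$ or induced $C_5$.
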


\begin{proof}
As $G$ is non-bipartite, $G$ has an odd cycle. Let $C$ be an odd cycle in $G$ of minimum length. Then $C$ is induced; otherwise we would find a shorter odd cycle.
For contradiction, suppose that $C$ has length at least $7$. Consider two vertices $u$ and $v$ at distance~$3$ in $C$. Then $C$ contains a $4$-vertex path $uxyv$ for some $x,y\in V(C)$. 
As $C$ is induced, $u$ and $v$ are non-adjacent. Hence, there exists a vertex $w$ not on $C$ that is adjacent to $u$ and $v$ (as $G$ has diameter~$2$). Then the subgraph of $G$ induced by $\{u,v,w,x,y\}$ contains a $C_3$ or an induced $C_5$,
contradicting the minimality of $C$. \qed
\end{proof}

\section{The Propagation Algorithm and Three Results}\label{s-pro}

We present our initial propagation algorithm, which is based on a number of (well-known) propagation rules; we illustrate Rules~\ref{r-diamond} and~\ref{r-bull} in Figures~\ref{f-diamond} and~\ref{f-bull}, respectively.

\begin{enumerate}[\bf Rule 1.]
	\item \label{r-empty} {\bf(no empty lists)} If $L(u)=\emptyset$ for some $u\in V$, then return \texttt{no}.\\[-12pt]
		\item \label{r-2sat} {\bf(not only lists of size~2)} If $|L(u)|\leq 2$ for every $u\in V$, then apply Theorem~\ref{t-2sat}.\\[-12pt]
	\item \label{r-one2} {\bf(single colour propagation)} If $u$ and $v$ are adjacent, $|L(u)|=1$, and $L(u)\subseteq L(v)$, then set $L(v) := L(v) \setminus L(u)$.\\[-12pt]
\item \label{r-diamond} {\bf(diamond colour propagation)} If $u$ and $v$ are adjacent and share two common non-adjacent neighbours $x$ and $y$ with 
$|L(x)|=|L(y)|=2$ and
$L(x)\neq L(y)$, then set $L(x):=L(x)\cap L(y)$ and $L(y):=L(x)\cap L(y)$ (so $L(x)$ and $L(y)$ get size $1$).\\[-12pt]
\item \label{r-bull} {\bf(bull colour propagation)} If $u$ and $v$ are the two degree-$1$ vertices of an induced bull $B$ of $G$ and $L(u)=L(v)=\{i\}$ for some $i\in \{1,2,3\}$ and moreover $L(w)\neq \{i\}$ for the degree-$2$ vertex $w$ of $B$, then set $L(w):=L(w)\cap \{i\}$.\\[-12pt]
\end{enumerate}

\begin{figure} [h]
	\resizebox{7.1cm}{!}{
	\begin{tikzpicture}[main_node/.style={circle,draw,minimum size=1cm,inner sep=3pt]}]
	\node[main_node](u) at (0,0){\LARGE$u$};
	\node[main_node](v) at (4,0){\LARGE$v$};
	\node[main_node](x) at (2,2){\LARGE$x$};
	\node[main_node](y) at (2,-2){\LARGE$y$};
	\draw[thick](u)--(v);
	\draw[thick](u)--(x);
	\draw[thick](u)--(y);
	\draw[thick](v)--(x);
	\draw[thick](v)--(y);
	\node(listx) at (3,3){\LARGE $\{i,j\}$};
	\node(listy) at (3,-3){\LARGE$\{i,k\}$};
	\hspace*{2cm}
	\node[main_node](u2) at (6,0){\LARGE$u$};
	\node[main_node](v2) at (10,0){\LARGE$v$};
	\node[main_node](x2) at (8,2){\LARGE$x$};
	\node[main_node](y2) at (8,-2){\LARGE$y$};
	\draw[thick](u2)--(v2);
	\draw[thick](u2)--(x2);
	\draw[thick](u2)--(y2);
	\draw[thick](v2)--(x2);
	\draw[thick](v2)--(y2);
	\node(listx2) at (9,3){ \LARGE$\{i,j\} \cap \{i,k\}=\{i\}$};
	\node(listy2) at (9,-3){\LARGE$\{i,k\} \cap \{i,j\}=\{i\}$};
	
	\end{tikzpicture}
}
	\caption{Left: A diamond graph before applying Rule~\ref{r-diamond}. Right: After applying Rule~\ref{r-diamond}.}\label{f-diamond}
	
\end{figure}
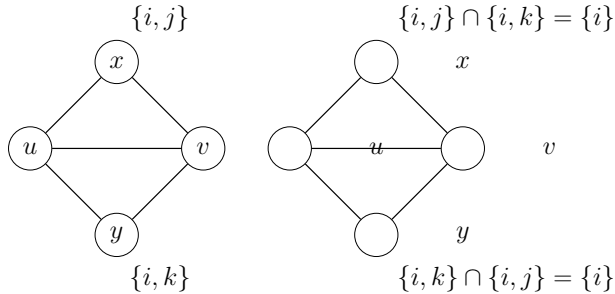
\begin{figure} [h]
	\resizebox{8.9cm}{!} {
	\begin{tikzpicture}[main_node/.style={circle,draw,minimum size=1cm,inner sep=3pt]}]
	\node[main_node](w) at (0,0){\LARGE$w$};
	\node[main_node](x) at (-2,-2){};
	\node[main_node](y) at (2,-2){};
	\node[main_node](u) at (-2,-4){\LARGE$u$};
	\node[main_node](v) at (2,-4){\LARGE$v$};
	\draw[thick](u)--(x)--(y)--(v);
	\draw[thick](x)--(w);
	\draw[thick](y)--(w);
	\node(listu) at (-3,-4){\LARGE$\{i\}$};
	\node(listv) at (1,-4){\LARGE$\{i\}$};
	\node(listw) at (0,1){\LARGE$L(w)$};
	\hspace*{2cm}
	\node[main_node](w1) at (6,0){\LARGE$w$};
	\node[main_node](x1) at (4,-2){};
	\node[main_node](y1) at (8,-2){};
	\node[main_node](u1) at (4,-4){\LARGE$u$};
	\node[main_node](v1) at (8,-4){\LARGE$v$};
	\draw[thick](u1)--(x1)--(y1)--(v1);
	\draw[thick](x1)--(w1);
	\draw[thick](y1)--(w1);
	\node(listu1) at (5,-4){\LARGE$\{i\}$};
	\node(listv1) at (9,-4){\LARGE$\{i\}$};
	\node(listw1) at (8,1){\LARGE$L(w) \cap \{i\}\;\;$ (so $L(w):=\{i\}$ or $L(w):=\emptyset$)};
	
	\end{tikzpicture}
}
	\caption{Left: A bull graph before applying Rule~\ref{r-bull}. Right: After applying Rule~\ref{r-bull}.}\label{f-bull}
\end{figure}
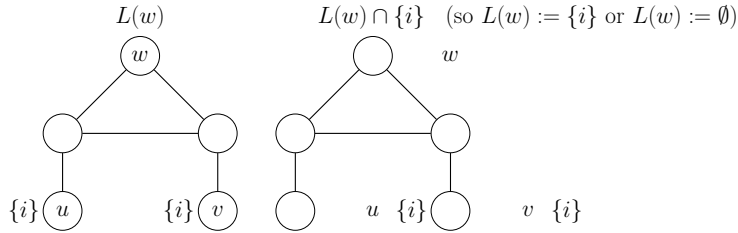

\noindent
We say that a propagation rule is {\it safe} if the new instance is a yes-instance of {\sc List 3-Colouring} if and only if the original instance is so.
We make the following observation, which is straightforward (see also~\cite{KMMNPS18}).

\begin{lemma}\label{l-easy}
Each of the Rules~\ref{r-empty}--\ref{r-bull} is safe and can be applied in polynomial time.
\end{lemma}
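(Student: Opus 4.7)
The plan is to handle each of the five rules in turn, verifying both that (i) the rule is safe, i.e.\ the modified instance $(G,L')$ is a yes-instance of {\sc List $3$-Colouring} if and only if the original $(G,L)$ is, and (ii) the rule can be tested and applied in polynomial time. Rules~\ref{r-empty}--\ref{r-one2} are essentially immediate: an empty list makes a respecting colouring impossible; Rule~\ref{r-2sat} is a direct invocation of Theorem~\ref{t-2sat}; and Rule~\ref{r-one2} is the standard single-colour propagation, safe because $u$ must receive its unique colour $i$ in any respecting colouring, so no neighbour of $u$ can use $i$. Safety is one-directional in all cases since list shrinking can only eliminate candidate colourings, never create new ones, so the nontrivial direction throughout is ``every respecting colouring of $(G,L)$ also respects $L'$''.

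The two genuine (but still short) safety arguments are for Rules~\ref{r-diamond} and~\ref{r-bull}, each a local case analysis on at most five vertices. For Rule~\ref{r-diamond}, let $c$ be any respecting $3$-colouring. Since $u$ and $v$ are adjacent, $c(u)\ne c(v)$, so there is a unique $m\in\{1,2,3\}\setminus\{c(u),c(v)\}$. Both $x$ and $y$ are adjacent to $u$ and $v$, forcing $c(x)=c(y)=m$. Hence $m\in L(x)\cap L(y)=\{i,j\}\cap\{i,k\}$, which equals $\{i\}$ because $L(x)\ne L(y)$ (so $j\ne k$, while $j\ne i$ and $k\ne i$ since $|L(x)|=|L(y)|=2$). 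Therefore $c(x)=c(y)=i$, so shrinking both lists to $\{i\}$ preserves every respecting colouring. For Rule~\ref{r-bull}, write $x,y,w$ for the triangle of the induced bull, with pendants $u$ on $x$ and $v$ on $y$. If $c(u)=c(v)=i$, then $c(x)\ne i$ and $c(y)\ne i$; together with $c(x)\ne c(y)$ this forces $\{c(x),c(y)\}=\{1,2,3\}\setminus\{i\}$, and then $c(w)$, which must differ from both $c(x)$ and $c(y)$, can only equal $i$. So $c(w)\in L(w)\cap\{i\}$, which is exactly the reduced list.

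Polynomial-time implementability is routine. Rule~\ref{r-empty} is a linear scan over $V$; Rule~\ref{r-2sat} reduces to Theorem~\ref{t-2sat} in linear time; Rule~\ref{r-one2} is a check over each edge. Rule~\ref{r-diamond} is detected by enumerating all $4$-tuples $(u,v,x,y)$ of vertices, testing in constant time whether they induce the required diamond and whether the lists satisfy the hypothesis, giving an $O(n^4)$ bound; Rule~\ref{r-bull} is detected analogously by enumerating $5$-tuples in $O(n^5)$ time. I do not anticipate any real obstacle here: the only thing to be careful about is the precise list condition in Rule~\ref{r-diamond} (in particular that $|L(x)|=|L(y)|=2$ together with $L(x)\ne L(y)$ really does force $L(x)\cap L(y)$ to be a singleton containing the common colour), which the argument above pins down explicitly.
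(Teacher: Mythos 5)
Your proof is correct and is exactly the argument the paper has in mind: the paper states Lemma~\ref{l-easy} without proof, calling it straightforward (with a pointer to~\cite{KMMNPS18}), and your rule-by-rule verification --- trivial one-directional safety for Rules~\ref{r-empty}--\ref{r-one2}, the forced third colour on the two non-adjacent diamond vertices for Rule~\ref{r-diamond}, and the forced colour of the degree-$2$ bull vertex for Rule~\ref{r-bull}, plus brute-force detection for the running time --- is the standard way to fill in those details. No gaps.
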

\noindent
Consider again an instance $(G,L)$.
Let $N_0$ be a subset of $V(G)$ that has size at most some constant. 
Assume that $G[N_0]$ has a colouring $c$ that respects the restriction of $L$ to $N_0$.
We say that $c$ is an {\it $L$-promising $N_0$-precolouring} of $G$.

In our algorithms we first determine a set $N_0$ of constant size and consider every $L$-promising $N_0$-precolouring of $G$. That is, we modify 
$L$ into a list assignment~$L_c$ with $L_c(u)=\{c(u)\}$ (where $c(u)\in L(u)$) for every $u\in N_0$ and $L_c(u)=L(u)$ for every $u\in V(G)\setminus N_0$).
We then apply Rules~\ref{r-empty}--\ref{r-bull}  on $(G,L_c)$ {\it exhaustively}, that is, until none of the rules can be applied anymore. This is the {\it propagation algorithm} and we say that it did a {\it full $c$-propagation}.
The propagation algorithm may output  {\tt yes} and {\tt no} (when applying Rules~\ref{r-empty} or~\ref{r-2sat}); else
it will output {\tt unknown}.

If the algorithm returns {\tt yes}, then $(G,L)$ is a yes-instance of {\sc List $3$-Colouring} by Lemma~\ref{l-easy}.
If it returns {\tt no}, then $(G,L)$ has no $L$-respecting colouring coinciding with~$c$ on~$N_0$, again by Lemma~\ref{l-easy}. If the algorithm returns 
{\tt unknown}, then $(G,L)$ may still have an $L$-respecting colouring that coincides with $c$ on $N_0$. In that case the propagation algorithm did not apply Rule~\ref{r-empty} or~\ref{r-2sat}. Hence, it modified $L_c$ into a list assignment~$L'_c$ of~$G$ such that $L'_c(u)\neq \emptyset$ for every $u\in V(G)$ and at least one vertex $v$ of $G$ still has a list $L'_c(v)$ of size~$3$, that is, $L'_c(v)=\{1,2,3\}$. We say that $L'_c$ (if it exists) is the {\it $c$-propagated list assignment} of $G$.

After performing a full $c$-propagation for every $L$-promising $N_0$-precolouring~$c$ of $G$ we say that we performed a {\it full $N_0$-propagation}.
We say that $(G,L)$ is {\it $N_0$-terminal} if after the full $N_0$-propagation
one of the following cases hold:

\begin{enumerate}
\item for some $L$-promising $N_0$-precolouring, the propagation algorithm returned~{\tt yes};
\item for every $L$-promising $N_0$-precolouring, the propagation algorithm returned~{\tt no}.
\end{enumerate}

\noindent
Note that if $(G,L)$ is $N_0$-terminal for some set $N_0$, then we have solved {\sc List $3$-Colouring} on instance $(G,L)$.
The next lemma formalizes our approach.

\begin{lemma}\label{l-easy2}
Let $(G,L)$ be an instance of {\sc List $3$-Colouring}. Let $N_0$ be a subset of $V(G)$ of constant size.
Performing a full $N_0$-propagation takes polynomial time. Moreover, if $(G,L)$ is $N_0$-terminal, then we have solved {\sc List $3$-Colouring} on instance $(G,L)$.
\end{lemma}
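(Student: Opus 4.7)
The plan is to handle the two claims in Lemma~\ref{l-easy2} separately, relying in both cases on Lemma~\ref{l-easy} together with the boundedness of $|N_0|$. For the running time, I would first argue that the number of $L$-promising $N_0$-precolourings is bounded by $3^{|N_0|}$, which is constant since $|N_0|$ is constant. Then, for each such precolouring~$c$, performing a full $c$-propagation consists of applying Rules~\ref{r-empty}--\ref{r-bull} exhaustively on $(G,L_c)$. By Lemma~\ref{l-easy}, every single rule application runs in polynomial time. Moreover, every application of Rule~\ref{r-one2}, Rule~\ref{r-diamond} or Rule~\ref{r-bull} strictly shrinks the list of at least one vertex, while Rules~\ref{r-empty} and~\ref{r-2sat} are terminal. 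Since every list starts with at most three colours, the total number of rule applications across the whole propagation is bounded by $3|V(G)|$. Hence a full $c$-propagation runs in polynomial time, and a full $N_0$-propagation, being a constant number of full $c$-propagations, also runs in polynomial time.

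For the second assertion, I would check the two cases in the definition of $N_0$-terminal separately. If for some $L$-promising $N_0$-precolouring $c$ the propagation algorithm returns \texttt{yes}, then this return can only be issued by Rule~\ref{r-2sat}, which applies Theorem~\ref{t-2sat} to find a list colouring respecting the current list assignment. By safety of Rules~\ref{r-empty}--\ref{r-bull} (Lemma~\ref{l-easy}), any such colouring lifts to an $L$-respecting colouring of $G$ extending $c$, so $(G,L)$ is a yes-instance. Conversely, if for \emph{every} $L$-promising $N_0$-precolouring $c$ the algorithm returns \texttt{no}, then by Lemma~\ref{l-easy} no $L$-respecting colouring of $G$ coincides with $c$ on $N_0$. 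Since any $L$-respecting colouring of $G$ must restrict on $N_0$ to some $L$-promising $N_0$-precolouring, we conclude that $(G,L)$ is a no-instance. In either case, we have correctly decided \textsc{List $3$-Colouring} on $(G,L)$.

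The statement is essentially a bookkeeping consequence of the setup developed just above it, so there is no genuine obstacle: the only thing that needs care is the polynomial bound on the number of rule applications in a single full $c$-propagation, which must use that lists have size at most~$3$ and that Rules~\ref{r-one2}, \ref{r-diamond} and~\ref{r-bull} are list-shrinking. The substantive work of the paper lies not in this lemma but in showing, for each of the target graph classes, that a suitable constant-size set $N_0$ can be chosen so that $(G,L)$ is always $N_0$-terminal; Lemma~\ref{l-easy2} merely certifies that once such an $N_0$ is identified, the algorithmic consequence is immediate.
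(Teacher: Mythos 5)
Your proposal is correct and follows essentially the same route as the paper's own proof: the running-time bound comes from the constant bound $3^{|N_0|}$ on the number of precolourings combined with Lemma~\ref{l-easy} and the $3|V(G)|$ bound on list-shrinking rule applications, and the correctness claim comes from the safety of the rules together with the observation that any $L$-respecting colouring restricts to an $L$-promising $N_0$-precolouring. Your write-up merely spells out the two cases of the second assertion in more detail than the paper does.
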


\begin{proof}
The first part of the lemma follows from the facts that (i) each application of each rule is safe and takes polynomial time by Lemma~\ref{l-easy}; (ii) if a rule does not return a {\tt yes}-answer or {\tt no-answer}, then it reduces the list size of at least one vertex and the latter can happen at most $3|V|$ times; and (iii) the number of $L$-promising $N_0$-precolourings of $G$ is at most $3^{|N_0|}$, which is a constant as $N_0$ has constant size. The second part of the lemma follows from the definition of a full $N_0$-propagation and Lemma~\ref{l-easy}. \qed
\end{proof}

\noindent
We now prove our first three results on {\sc List $3$-Colouring} for diameter-$2$ graphs. The first result generalizes a corresponding result for {\sc $3$-Colouring} in~\cite{MPS19}.	
	
\begin{theorem}\label{t-c5}
{\sc List $3$-Colouring} can be solved in polynomial time for $C_5$-free graphs of diameter at most~$2$.
\end{theorem}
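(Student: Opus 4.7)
The plan is to apply the propagation framework of Lemma~\ref{l-easy2} with $N_0$ taken to be the vertex set of a triangle. Lemma~\ref{l-cycle} together with $C_5$-freeness ensures that either $G$ is bipartite or $G$ contains a triangle. In the bipartite case, since diameter~$2$ forces connectivity, any non-adjacent pair $a\in A, b\in B$ across the bipartition would need a common neighbour lying in both parts, an absurdity; so $G$ is complete bipartite $K_{m,n}$, and {\sc List $3$-Colouring} on $K_{m,n}$ is solved in $O(1)$ time by enumerating the $O(1)$ partitions of $\{1,2,3\}$ into ``colours used on the left'' and ``colours used on the right'' and checking feasibility.

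From now on assume $G$ has a triangle $T=\{v_1,v_2,v_3\}$ and take $N_0=T$. There are at most six $L$-promising precolourings $c$ of $T$. The key claim is: for every such $c$, either the full $c$-propagation returns \texttt{yes} or \texttt{no}, or $G$ contains an induced $K_4$. Since a $K_4$ rules out any $3$-colouring, this means we may safely return \texttt{no} whenever the propagation halts in the \texttt{unknown} state; combined with Lemma~\ref{l-easy2}, the instance becomes effectively $N_0$-terminal and the whole algorithm runs in polynomial time.

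To prove the claim, suppose after the full $c$-propagation some vertex $u$ still carries a list of size~$3$. Then no neighbour of $u$ has a singleton list (else Rule~\ref{r-one2} would have reduced $L(u)$), so in particular $u$ is non-adjacent to every $v_i$ and thus at distance exactly~$2$ from each of them. For each $i$ pick $x_i\in N(u)\cap N(v_i)$ using diameter~$2$. If some $x_i$ were also in $N(v_j)$ for $j\neq i$, two applications of Rule~\ref{r-one2} (from $v_i$ and from $v_j$) would shrink $L(x_i)$ to a singleton or the empty set, whence Rule~\ref{r-empty} or a further Rule~\ref{r-one2} onto $u$ fires, a contradiction. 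Hence each $x_i$ is adjacent to $v_i$ alone within $T$, the three $x_i$ are pairwise distinct, and for every pair $i\neq j$ the $5$-cycle $v_j,v_i,x_i,u,x_j,v_j$ lives in $G$. Its only candidate chord among $v_iu,v_ju,v_ix_j,v_jx_i,x_ix_j$ is $x_ix_j$ (the rest are non-edges by the placement of $u$ and the $x$'s), so $C_5$-freeness forces $x_ix_j\in E(G)$. Hence $\{x_1,x_2,x_3\}$ is a triangle and $\{u,x_1,x_2,x_3\}$ induces a $K_4$, proving the claim.

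The main obstacle is precisely this $K_4$-witness step: ruling out \texttt{unknown} requires combining three ingredients in concert, namely the diameter-$2$ property (producing each $x_i$), the single-colour propagation rule (forcing each $x_i$ to be $v_i$-private within $T$), and $C_5$-freeness (upgrading three private neighbours into a mutual triangle via chord-elimination on an induced $5$-cycle). Once the claim is in hand, the algorithm is: iterate over the $O(1)$ precolourings of $T$, output \texttt{yes} iff some precolouring propagates to \texttt{yes}, and otherwise output \texttt{no}; polynomial running time follows from Lemma~\ref{l-easy2}.
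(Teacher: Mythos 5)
Your proof is correct and follows essentially the same route as the paper's: reduce to the complete bipartite case or take a triangle as $N_0$, run the propagation, and show that a vertex surviving with a full list would have three private common neighbours with the triangle whose pairwise adjacency is forced by $C_5$-freeness, producing a $K_4$. The only cosmetic difference is that the paper tests for a $K_4$ up front and then derives a contradiction, whereas you fold the $K_4$ witness into the \texttt{unknown} branch and answer \texttt{no}; both are sound.
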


\begin{proof}
Let $G=(V,E)$ be a $C_5$-free graph of diameter~$2$ with a list $3$-assignment~$L$.
We first check in polynomial time if $G$ is bipartite.
Suppose that we find that $G$ is bipartite, say with partition classes $A$ and $B$. As $G$ has diameter~$2$, we find that $G$ must be complete bipartite. This implies that either $A$ or $B$ must be monochromatic. For each $i\in \bigcap_{u\in A}L(u)$ (which might be empty) we set $L(u)=\{i\}$ for every $u\in A$ and $L(v):=L(v)\setminus \{i\}$ for every $i\in B$ and apply Theorem~\ref{t-2sat}. If we do not find a colouring respecting $L$, then we reverse the role of $A$ and $B$ and perform the same step.

Now suppose that we find that $G$ is not bipartite. If $G$ contains a $K_4$, then $G$ is not $3$-colourable, and hence $(G,L)$ is a no-instance of {\sc List $3$-Colouring}. We can check this in $O(|V|^4)$ time. From now on we assume that $G$ is $K_4$-free and non-bipartite. The latter implies that $G$ must have a triangle or an induced $C_5$, due to Lemma~\ref{l-cycle}.
As $G$ is $C_5$-free, it follows that $G$ has at least one triangle.

\begin{figure} [h]
		\resizebox{12.5cm}{!} {
			\begin{tikzpicture}[main_node/.style={circle,draw,minimum size=1cm,inner sep=5pt]}]
			\node[main_node](x1) at (0,0){\LARGE$x_1$};
			\node[main_node](x2) at (4,0){\LARGE$x_2$};
			\node[main_node](x3) at (8,0){\LARGE$x_3$};

			\draw[thick] (x1)--(x2)--(x3);
			\draw[thick] (x1) to[out=30, in=150] (x3);
			
			\draw[thick, dash dot] (-4,-1)--(10,-1);
			\node(n0) at (-3,0) {\huge$N_0$};
			\node(n1) at (-3,-2) {\huge $N_1$};
			\node(n2) at (-3,-4) {\huge$N_2$};
			\node[main_node](y1) at (0,-2){\LARGE$y_1$};
			\node[main_node](y2) at (4,-2){\LARGE$y_2$};
			\node[main_node](y3) at (8,-2){\LARGE$y_3$};
			\node[main_node](u) at (0,-4){\LARGE$u$};
			\draw[thick](x1)--(y1);
			\draw[thick](x2)--(y2);
			\draw[thick](x3)--(y3);
			\draw[thick](y1)--(u);
			\draw[thick](y2)--(u);
			\draw[thick](y3)--(u);

			\draw[thick, dash dot] (-4,-3)--(10,-3);

			\draw[thick, dash dot] (-2,2)--(-2,-5);
			
			\node[main_node](x4) at (12,0){\LARGE$x_1$};
			\node[main_node](x5) at (16,0){\LARGE$x_2$};
			\node[main_node](x6) at (20,0){\LARGE$x_3$};

			\draw[thick] (x4)--(x5)--(x6);
			\draw[thick] (x4) to[out=30, in=150] (x6);
			
			\draw[thick, dash dot] (10,-1)--(22,-1);
			
			\node[main_node](y4) at (12,-2){\LARGE$y_1$};
			\node[main_node](y5) at (16,-2){\LARGE$y_2$};
			\node[main_node](u2) at (12,-4){\LARGE$u$};
			\draw[thick](x4)--(y4);
			\draw[thick](x5)--(y5);
			\draw[thick](x6)--(y5);
			\draw[thick](y4)--(y5);
			\draw[thick](y4)--(u2);
			\draw[thick](y5)--(u2);

			\draw[thick, dash dot] (10,-3)--(22,-3);

			\draw[thick, dash dot] (10,2)--(10,-5);

			\end{tikzpicture}
		}
		\caption{Left: Examining the situation in the proof of Theorem~\ref{t-c5} where a vertex $u\in N_2$ does not belong to $T$; we show that $y_1$, $y_2$,  $y_3$ and $u$ either form a $K_4$ or we would find an induced $C_5$ (both of these cases are not possible). Right: A situation where $u\in T$. }\label{f-left}
	\end{figure}
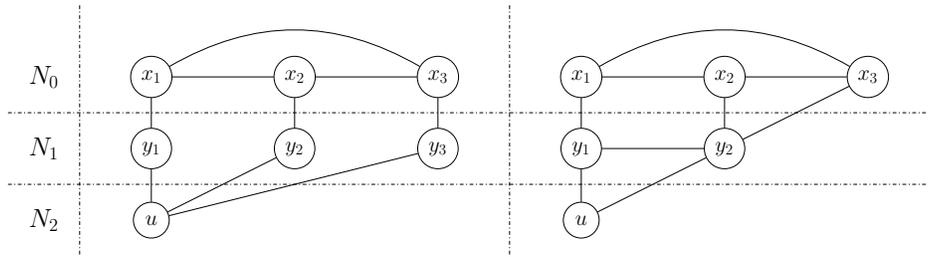
	
Let $C$ be a triangle in $G$. We write $N_0=V(C)=\{x_1,x_2,x_3\}$, $N_1=N(V(C))$ and $N_2=V(G)\setminus (N_0\cup N_1)$.
As $N_0$ has size~3, we can apply a full $N_0$-propagation in polynomial time by Lemma~\ref{l-easy2}. By the same lemma we are done if we can prove that $(G,L)$ is $N_0$-terminal. We prove this claim below after first showing a structural result.

As $G$ has diameter~$2$, for every $i\in \{1,2,3\}$, it holds that every vertex in $N_2$ has a neighbour in $N_1$ that is adjacent to $x_i$. 
Now let $T$ consist of all vertices of $N_2$ that have a neighbour in $N_1$ that is adjacent to exactly two vertices of $N_0$. 

\medskip
\noindent
{\it Claim 1. $N_2=T$.}

\medskip
\noindent
We prove Claim~1 as follows. 
Let $u\in N_2$. For contradiction, assume $u\notin T$.
If $u$ has a neighbour $y\in N_1$ adjacent to every $x_i$, then $G$ contains a $K_4$, a contradiction.
Hence, as $u\notin T$, we find that $u$ must have three distinct neighbours $y_1,y_2,y_3$, such that for $i\in \{1,2,3\}$, it holds that $N(y_i)\cap N_0=\{x_i\}$. If $\{y_1,y_2,y_3\}$ is a clique, then $G$ has a $K_4$ on vertices $u,y_1,y_2,y_3$, a contradiction. 
Hence, we may assume without loss of generality that $y_1$ and $y_2$ are non-adjacent. However, then $\{u,y_1,x_1,x_2,y_2\}$ induces a $C_5$ in $G$, another contradiction. See also Figure~\ref{f-left}. We conclude that $T=N_2$. This proves Claim~1.

\medskip
\noindent
Now, for contradiction, assume that $(G,L)$ is not $N_0$-terminal. Then there must exist an $L$-promising $N_0$-precolouring $c$
for which we obtain the $c$-propagated list assignment $L'_c$. By definition of $L'_c$ we find that $G$ contains 
a vertex $u$ with $L'_c(u)=\{1,2,3\}$. Then $u\notin N_0$, as every $v\in N_0$ has $L'_c(v)=\{c(v)\}$. 
Moreover, $u\notin N_1$, as vertices in $N_1$ have a list of size at most~$2$ after applying Rule~\ref{r-one2}.
Hence, we find that $u\in N_2$. As $N_2=T$ by Claim~1, we find that $u\in T$. From the definition of $T$ it follows that $u$ has a neighbour $v\in N_1$ with two neighbours in~$N_0$. By Rule~\ref{r-one2}, we find that $|L_c(v)|=1$. By the same rule, this implies that $|L'_c(u)|\leq 2$, a contradiction.
We conclude that $(G,L)$ is $N_0$-terminal. \qed
\end{proof}

\begin{figure} [h]
		\resizebox{12.5cm}{!} {
			\begin{tikzpicture}[main_node/.style={circle,draw,minimum size=1cm,inner sep=3pt]}]
			\node[main_node](x1) at (0,0){\LARGE$x_1$};
			\node[main_node](x2) at (4,0){\LARGE$x_2$};
			\node[main_node](x3) at (8,0){\LARGE$x_3$};
			\node[main_node](x4) at (12,0){\LARGE$x_4$};
			\node[main_node](x5) at (16,0){\LARGE$x_5$};	
			
			\draw [thick] (x1)--(x2)--(x3)--(x4)--(x5);
			\draw [thick](x1) to[out=30, in=150] (x5);
			\node(n0) at (-3,0) {\huge $N_0$};
			\node(n1) at (-3,-2) {\huge $N_1$};
			\node(n2) at (-3,-4) {\huge$N_2$};
			\draw[thick, dash dot] (-4,-1)--(10,-1);
			
			\node[main_node](y) at (0,-2){\LARGE$y$};
			\node[main_node](z) at (4,-2){\LARGE$z$};
			\node[main_node](v) at (0,-4){\LARGE$v$};
			\node[main_node](w) at (8,-2){\LARGE$w$};
			\draw[thick](x1)--(y)--(v);
			\draw[thick](x3)--(z)--(v);
			\draw[thick](y)--(z);
			\draw[thick](x4)--(w)--(v);
			\draw[thick](w) to[out=150, in=30] (y);

			\draw[thick, dash dot] (-4,-3)--(10,-3);

			\draw[thick, dash dot] (-2,2)--(-2,-5);
			
			\end{tikzpicture}
		}
			\caption{The situation in the proof of Theorem~\ref{t-c6}, which is similar to the situation in the proof of Theorem~\ref{t-c4c7}.}\label{f-both}
			\end{figure}
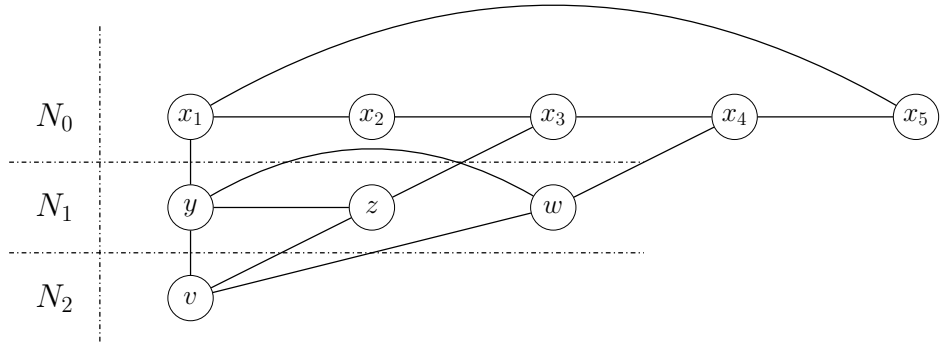

\begin{theorem}\label{t-c6}
{\sc List $3$-Colouring} can be solved in polynomial time for $C_6$-free graphs of diameter at most $2$.
\end{theorem}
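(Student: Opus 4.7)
The proof plan closely mirrors that of Theorem~\ref{t-c5}. First, dispatch the easy cases: if $G$ is bipartite then, having diameter $2$, it is complete bipartite and reduces to $2$-List Colouring via Theorem~\ref{t-2sat}; if $G$ contains a $K_4$ then $(G,L)$ is a no-instance; and if $G$ is $C_5$-free then Theorem~\ref{t-c5} applies. In the remaining case $G$ is non-bipartite and $K_4$-free, and Lemma~\ref{l-cycle} together with the failure of $C_5$-freeness lets us fix an induced $5$-cycle $C = x_1x_2x_3x_4x_5x_1$. Set $N_0 := V(C)$ and perform a full $N_0$-propagation; by Lemma~\ref{l-easy2} it then suffices to prove that $(G,L)$ is $N_0$-terminal.

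Suppose for contradiction that some $L$-promising $N_0$-precolouring $c$ leaves a vertex $v$ with $L'_c(v) = \{1,2,3\}$. The argument of Theorem~\ref{t-c5} shows $v \in N_2 := V(G) \setminus (N_0 \cup N_1)$. Diameter $2$ provides, for each $i \in \{1,\ldots,5\}$, a vertex $y_i \in N(v) \cap N_1$ adjacent to $x_i$. The key preliminary observation is that for every $y \in N(v) \cap N_1$, the set $S(y) := N(y) \cap N_0$ is monochromatic under $c$: otherwise $|L_c(y)| = 1$ after the precolouring and Rule~\ref{r-one2} applied to the edge $vy$ would shrink $L(v)$, contradicting $L'_c(v) = \{1,2,3\}$. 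Since every proper $3$-colouring of $C$ has colour classes of sizes $2,2,1$, with each class of size $2$ consisting of two vertices at distance $2$ in $C$, each $S(y)$ is either a singleton $\{x_i\}$ or such a pair $\{x_i, x_{i+2}\}$.

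The heart of the proof is the following forcing principle: whenever $y, y' \in N(v) \cap N_1$ are distinct with $x_i \in S(y)$, $x_j \in S(y')$ and $x_i, x_j$ at distance $2$ in $C$ via an intermediate vertex $x_k \notin S(y) \cup S(y')$, the cycle $v, y, x_i, x_k, x_j, y', v$ has length $6$ and its only possible chord is the edge between $y$ and $y'$; so $C_6$-freeness forces $y$ and $y'$ to be adjacent. A short case analysis on the possible covers of $N_0$ by the sets $S(y)$ --- two pairs plus one singleton, one pair plus three singletons, or five singletons --- combines these forced adjacencies with $v$'s edges to $N(v) \cap N_1$ to produce a $K_4$ (contradicting $K_4$-freeness), except in the all-singleton subcase, where if the pair $y_3, y_4$ happens to be non-adjacent, the configuration $(v, y_1)$ with common non-adjacent neighbours $y_3, y_4$ satisfies the hypothesis of Rule~\ref{r-diamond} (since $L_c(y_3)$ and $L_c(y_4)$ are distinct lists of size $2$ because $c(x_3) \neq c(x_4)$); its application shrinks $|L(y_3)|$ to $1$, after which Rule~\ref{r-one2} reduces $L(v)$ below $\{1,2,3\}$. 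The main obstacle is this case analysis; each subcase is quickly resolved once the forcing principle is in place.
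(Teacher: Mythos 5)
Your overall strategy coincides with the paper's: take an induced $C_5$ as $N_0$, run the full $N_0$-propagation, and show $(G,L)$ is $N_0$-terminal by using $C_6$-freeness to force adjacencies among the $N_1$-neighbours of the offending vertex $v$, finishing with $K_4$-freeness or Rule~\ref{r-diamond}. Your observation that each $S(y)=N(y)\cap N_0$ must be monochromatic is correct and is the right engine. However, two points in your endgame need repair. First, the forcing principle as stated is incomplete: the $6$-cycle $v,y,x_i,x_k,x_j,y'$ could also carry the chords $yx_j$ or $y'x_i$, so you must additionally assume $x_j\notin S(y)$ and $x_i\notin S(y')$. This is harmless in practice, since the $S$-sets are monochromatic and in every application you pair vertices from different colour classes or singleton $S$-sets, but it should be stated.

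Second, and more substantively, the ``one pair plus three singletons'' case does \emph{not} force a $K_4$. Take $c(x_1)=1$, $c(x_2)=c(x_4)=2$, $c(x_3)=c(x_5)=3$ and witnesses with $S(y_1)=\{x_1\}$, $S(y_{24})=\{x_2,x_4\}$, $S(y_3)=\{x_3\}$, $S(y_5)=\{x_5\}$. The forcing principle yields only $y_1y_{24}$, $y_1y_3$, $y_{24}y_5$ and $y_3y_5$; it is silent on $y_1y_5$ (as $x_1x_5\in E$) and on $y_{24}y_3$ (as both $x_2$ and $x_4$ are adjacent to $x_3$). That is a $4$-cycle in $N(v)$, so no triangle and no $K_4$. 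You must fall back on the same diamond argument you reserve for the all-singleton case: $y_1$ and $y_5$ are common neighbours of the adjacent pair $v,y_3$ with distinct size-$2$ lists $\{2,3\}$ and $\{1,2\}$, so either $y_1y_5\in E$ and $\{v,y_1,y_3,y_5\}$ is a $K_4$, or Rule~\ref{r-diamond} fires and shrinks $L(v)$; the symmetric subcase with the pair $\{x_3,x_5\}$ is analogous. With these repairs your case analysis closes. For comparison, the paper sidesteps the case analysis entirely: exploiting that one colour occurs exactly once on the $C_5$, it picks just three witnesses $y,z,w$ (common neighbours of $v$ with $x_1$, $x_3$ and $x_4$), forces $yz,yw\in E$ by $C_6$-freeness, and then either $wz\in E$ gives a $K_4$ or $\{v,w,y,z\}$ is a diamond to which Rule~\ref{r-diamond} applies.
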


\begin{proof}
Let $G=(V,E)$ be a $C_6$-free graph of diameter~$2$ with a list $3$-assignment~$L$. If $G$ is $C_5$-free, then we apply Theorem~\ref{t-c5}. 
 If $G$ contains a $K_4$, then $G$ is not $3$-colourable and hence, $(G,L)$ is a no-instance of {\sc List $3$-Colouring}.
 We check these properties in polynomial time. So, from now on, we assume that $G$ is a $K_4$-free graph that contains an induced $5$-vertex cycle~$C$, say with vertex set $N_0=\{x_1,\ldots,x_5\}$ in this order. Let $N_1$ be the set of vertices that do not belong to $C$ but that are adjacent to at least one vertex of $C$. Let $N_2=V\setminus (N_0\cup N_1)$  be the set of remaining vertices.

As $N_0$ has size~$5$, we can apply a full $N_0$-propagation in polynomial time by Lemma~\ref{l-easy2}. By the same lemma we are done if we can prove that $(G,L)$ is $N_0$-terminal. We prove this claim below.

For contradiction, assume that $(G,L)$ is not $N_0$-terminal. Then there must exist an $L$-promising $N_0$-precolouring $c$
for which we obtain the $c$-propagated list assignment $L'_c$. By definition of $L'_c$ we find that $G$ contains a 
vertex $v$ with $L'_c(v)=\{1,2,3\}$. Then $v\notin N_0$, as every $u\in N_0$ has $L'_c(u)=\{c(u)\}$. 
Moreover, $v\notin N_1$, as vertices in $N_1$ have a list of size at most~$2$ after applying Rule~\ref{r-one2}.
Hence, we find that $v\in N_2$. 

We first note that some colour of $\{1,2,3\}$ appears exactly once on $N_0$, as $|N_0|=5$. Hence, we may assume without loss of generality that $c(x_1)=1$ and that $c(x_i)\in \{2,3\}$ for every $i\in \{2,3,4,5\}$.

As $G$ has diameter $2$, there exists a vertex $y\in N_1$ that is adjacent to $x_1$ and~$v$. As $L_c'(v)=\{1,2,3\}$ and $c(x_1)=1$, we find that $L_c'(y)=\{2,3\}$. As $c(x_i)\in \{2,3\}$ for every $i\in \{2,3,4,5\}$, the
 latter means that $y$ is not adjacent to any $x_i$ with $i\in \{2,3,4,5\}$. Hence, as $G$ has diameter~$2$, there exists a vertex $z\in N_1$ with $z\neq y$, such that $z$ is adjacent to $x_3$ and $v$. We assume without loss of generality that $c(x_3)=3$ and thus $c(x_2)=c(x_4)=2$ and thus $c(x_5)=3$.  As $L_c'(v)=\{1,2,3\}$ and $c(x_3)=3$, we find that $L_c'(z)=\{1,2\}$. 
Hence, $z$ is not adjacent to any vertex of $\{x_1,x_2,x_4\}$.
Now the set $\{x_1,x_2,x_3,z,v,y\}$ forms a cycle on six vertices. As $G$ is $C_6$-free, this cycle cannot be induced. Hence, the above implies that $y$ and $z$ must be adjacent; see also Figure~\ref{f-both}.

As $G$ has diameter~$2$, there exists a vertex $w\in N_1$ that is adjacent to $x_4$ and $v$. As both $y$ and $z$ are not adjacent to $x_4$, we find that $w\notin \{y,z\}$.
As $L'_c(v)=\{1,2,3\}$ and $c(x_4)=2$, we find that $L_c'(w)=\{1,3\}$.
As $c(x_1)=1$ and $c(x_3)=c(x_5)=3$, the latter
 implies that $w$ is not adjacent to any vertex of $\{x_1,x_3,x_5\}$. Consequently, $w$ must be adjacent to $y$, as otherwise the $6$-vertex cycle with vertex set $\{x_1,x_5,x_4,w,v,y\}$ would be induced, contradicting the $C_6$-freeness of $G$. We refer again to Figure~\ref{f-both} for a display of the situation.

If $w$ and $z$ are adjacent, then $\{v,w,y,z\}$ induces a $K_4$, contradicting the $K_4$-freeness of $G$. 
Hence, $w$ and $z$ are not adjacent.
Then $\{v,w,y,z\}$ induces a diamond, in which $w$ and $z$ are the two non-adjacent vertices. 
However, as $L_c'(w)=\{1,3\}$ and $L_c'(z)=\{1,2\}$, our algorithm would have applied Rule~\ref{r-diamond}. This would have resulted in lists of $w$ and $z$ that are both equal to $\{1,3\}\cap \{1,2\}=\{1\}$. Hence, we obtained a contradiction and conclude that $(G,L)$ is $N_0$-terminal. \qed
\end{proof}

\noindent
Theorem~\ref{t-c4c7} is proven in a similar way as Theorem~\ref{t-c6}.

\begin{theorem}\label{t-c4c7}
{\sc List $3$-Colouring} can be solved in polynomial time for $(C_4,C_7)$-free graphs of diameter~$2$.
\end{theorem}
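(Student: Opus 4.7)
The plan is to mirror the proof of Theorem~\ref{t-c6} essentially verbatim, with the role of $C_6$-freeness replaced by the combination of $C_4$- and $C_7$-freeness. First I would dispose of the easy cases: if $G$ is $C_5$-free then Theorem~\ref{t-c5} applies, and if $G$ contains a $K_4$ then $(G,L)$ is a no-instance. Otherwise $G$ has an induced $5$-cycle $C$ with vertices $N_0=\{x_1,\ldots,x_5\}$ in cyclic order; set $N_1=N(N_0)$ and $N_2=V\setminus(N_0\cup N_1)$. By Lemma~\ref{l-easy2} it suffices to show that $(G,L)$ is $N_0$-terminal.

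Arguing by contradiction, suppose some $L$-promising $N_0$-precolouring $c$ has a $c$-propagated list assignment $L'_c$ with $L'_c(v)=\{1,2,3\}$ for some $v$. As in the proof of Theorem~\ref{t-c6}, $v \in N_2$, and up to symmetry $c(x_1)=1$, $c(x_2)=c(x_4)=2$ and $c(x_3)=c(x_5)=3$. Diameter~$2$ provides, for each $i$, a vertex $y_i\in N_1$ adjacent to $x_i$ and $v$; Rule~\ref{r-one2} together with $L'_c(v)=\{1,2,3\}$ forces $L'_c(y_i)=\{1,2,3\}\setminus\{c(x_i)\}$, and in particular $y_i$ cannot be adjacent to any $x_j\in N_0$ whose colour differs from $c(x_i)$ (else $L'_c(y_i)$ would collapse to a singleton and reduce $L'_c(v)$). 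The remaining case, that $y_i$ is adjacent to the other $x_j$ of its own colour---say $y_2 \sim x_4$---is excluded by $C_4$-freeness: in that event $y_2\,x_2\,x_3\,x_4\,y_2$ would be an induced $C_4$, since $x_2 x_4$ is a non-edge of $C$ and $y_2$ is not adjacent to $x_3$ (again by the colour argument). Hence $N(y_i)\cap N_0=\{x_i\}$ for all $i$, and in particular the $y_i$ are pairwise distinct.

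It remains to invoke $C_7$-freeness. Because $x_3$ and $x_5$ share the colour $3$, consider the $7$-cycle
\[
x_3\,x_2\,x_1\,x_5\,y_5\,v\,y_3\,x_3,
\]
obtained from the length-$3$ arc of $C$ between $x_3$ and $x_5$ together with the paths $x_5\,y_5\,v$ and $v\,y_3\,x_3$. A short case check, using $v\in N_2$ and $N(y_i)\cap N_0=\{x_i\}$, shows that the only candidate chord is $y_3 y_5$; hence $y_3 y_5 \in E(G)$. Then $\{x_3,y_3,v,y_5,x_5\}$ induces a bull whose two degree-$1$ vertices $x_3$ and $x_5$ share the list $\{3\}$, so Rule~\ref{r-bull} would already have forced $L'_c(v)\subseteq\{3\}$, contradicting $L'_c(v)=\{1,2,3\}$. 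The main obstacle is simply organising these adjacency arguments cleanly; once one knows where to apply $C_4$-freeness and which $7$-cycle to use, the rest is bookkeeping of the same flavour as in Theorem~\ref{t-c6}.
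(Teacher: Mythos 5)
Your proof is correct, and the overall framework is the one the paper uses: reduce to a non-bipartite, $K_4$-free, $C_5$-containing graph, take $N_0$ to be an induced $C_5$, and derive a contradiction from a vertex $v\in N_2$ that keeps a full list after exhaustive propagation. Where you genuinely diverge is in the endgame. The paper follows its Theorem~\ref{t-c6} proof almost verbatim: it picks neighbours $y,z,w$ of $v$ attached to $x_1,x_3,x_4$ respectively, uses \emph{two} induced $7$-cycles (namely $\{x_1,x_5,x_4,x_3,z,v,y\}$ and $\{x_1,x_2,x_3,x_4,w,v,y\}$, each made chordless-except-for-one-edge via $C_4$-freeness) to force the edges $yz$ and $yw$, and then finishes with the diamond rule (Rule~\ref{r-diamond}) on $\{v,w,y,z\}$. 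You instead focus on the two same-coloured vertices $x_3,x_5$, use a \emph{single} $7$-cycle $x_3x_2x_1x_5y_5vy_3$ to force the one edge $y_3y_5$, and finish with the bull rule (Rule~\ref{r-bull}) on $\{x_3,y_3,v,y_5,x_5\}$. Both arguments are sound (I checked your chord analysis and the $C_4$-freeness step ruling out $y_i$ being adjacent to the other vertex of its colour); yours is slightly more economical in that it needs only one application of $C_7$-freeness and one fewer auxiliary neighbour of $v$, at the cost of not paralleling the $C_6$ proof as closely, which is the presentational economy the paper opts for.
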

\begin{proof}
Let $G=(V,E)$ be a $C_4$-free graph of diameter~$2$ with a list $3$-assignment~$L$. If $G$ is $C_5$-free, then we apply Theorem~\ref{t-c5}. Hence we may assume that $G$ contains an induced $5$-vertex cycle~$C$, say with vertex set $N_0=\{x_1,\ldots,x_5\}$ in this order. As before, we let $N_1$ be the set of vertices that do not belong to $C$ but that are adjacent to at least one vertex of $C$. We also let $N_2=V\setminus (N_0\cup N_1)$  denote the set of remaining vertices again.

As $N_0$ has size~$5$, we can apply a full $N_0$-propagation in polynomial time by Lemma~\ref{l-easy2}. By the same lemma we are done if we can prove that $(G,L)$ is $N_0$-terminal. We prove this claim in exactly the same way in which we proved a similar claim in 
the proof of Theorem~\ref{t-c6} except for the following differences:
\begin{enumerate}
\item instead of using the $6$-vertex set $\{x_1,x_2,x_3,z,v,y\}$ we use the $7$-vertex set 
$\{x_1,x_5,x_4,x_3,z,v,y\}$ after observing that $z$ cannot be adjacent to $x_5$ due to the $C_4$-freeness of $G$, and
\item instead of using the $6$-vertex set $\{x_1,x_5,x_4,w,v,y\}$ we use the $7$-vertex set $\{x_1,x_2,x_3,x_4,w,v,y\}$ after observing that $w$ cannot be adjacent to $x_2$, again due to the $C_4$-freeness of $G$.
\end{enumerate}
We refer again to Figure~\ref{f-both} for a display of the situation. \qed
\end{proof}

\section{The Extended Propagation Algorithm and Two Results}\label{s-pro2}

For our next two results, we need a more sophisticated method. Let $(G,L)$ be an instance of {\sc List $3$-Colouring}. 
Let $p$ be some positive constant. We consider each set $N_0\subseteq V(G)$ of size at most~$p$ and perform a full $N_0$-propagation. 
Afterwards we say that we performed a {\it full $p$-propagation}.
We say that $(G,L)$ is {\it $p$-terminal} if after the full $p$-propagation one of the following cases hold:

\begin{enumerate}
\item for some $N_0\subseteq V(G)$ with $|N_0|\leq c$, there is an $L$-promising $N_0$-precolouring~$c$, such that the propagation algorithm returns {\tt yes}; or
\item for every set $N_0\subseteq V(G)$ with $|N_0|\leq c$ and every $L$-promising $N_0$-precolouring~$c$, the propagation algorithm returns {\tt no}.
\end{enumerate}

\noindent
We can now prove the following lemma.

\begin{lemma}\label{l-easy3}
Let $(G,L)$ be an instance of {\sc List $3$-Colouring} and $p\geq 1$ be some constant.
Performing a full $p$-propagation takes polynomial time. Moreover, if $(G,L)$ is $p$-terminal, then we have solved {\sc List $3$-Colouring} on instance $(G,L)$.
\end{lemma}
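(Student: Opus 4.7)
The plan is to reduce Lemma~\ref{l-easy3} directly to Lemma~\ref{l-easy2} by enumerating the polynomially many candidate sets~$N_0$ and invoking the single-set case on each of them. Both halves of the statement then follow with essentially no new content beyond bookkeeping.

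For the running time, first bound the number of subsets $N_0 \subseteq V(G)$ with $|N_0| \leq p$ by $\sum_{i=0}^{p} \binom{|V(G)|}{i} = O(|V(G)|^p)$, which is polynomial since $p$ is a fixed constant. For each such $N_0$, Lemma~\ref{l-easy2} already gives that the full $N_0$-propagation runs in polynomial time; it iterates over at most $3^{|N_0|} \leq 3^p$ $L$-promising $N_0$-precolourings and runs one exhaustive pass of Rules~\ref{r-empty}--\ref{r-bull} per precolouring. Summing over all candidate sets~$N_0$ yields the desired polynomial overall bound on a full $p$-propagation.

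For correctness, suppose $(G,L)$ is $p$-terminal and split on the two cases of the definition. In the first case, some $L$-promising $N_0$-precolouring~$c$ causes the propagation algorithm to return \texttt{yes}; since every rule is safe by Lemma~\ref{l-easy}, this certifies that $(G,L)$ is a yes-instance. In the second case, every $L$-promising $N_0$-precolouring, for every $N_0$ of size at most~$p$, causes the algorithm to return \texttt{no}. To justify the answer \texttt{no}, argue by contradiction: if $G$ admits a colouring $c^*$ respecting~$L$, take $N_0 = \emptyset$, which is permitted because $0 \leq p$. The unique $L$-promising $\emptyset$-precolouring is then the empty map, the modified list assignment coincides with~$L$, and $c^*$ itself witnesses that the instance is a yes-instance. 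By safeness (Lemma~\ref{l-easy}), exhaustive application of Rules~\ref{r-empty}--\ref{r-bull} preserves this property, so the propagation algorithm cannot output \texttt{no} on $(G,L)$, contradicting the hypothesis. Hence $(G,L)$ is a no-instance.

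I do not expect a serious obstacle: the statement is a clean generalisation of Lemma~\ref{l-easy2} obtained by iterating over subsets of size at most~$p$. The only subtlety is confirming that the empty precolouring counts as a legitimate $L$-promising $\emptyset$-precolouring so that the contradiction in the second case goes through; once this is observed, the rest is routine.
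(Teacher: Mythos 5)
Your proposal is correct and matches the paper's proof almost exactly: enumerate the $O(|V(G)|^p)$ candidate sets, invoke Lemma~\ref{l-easy2} for the running time of each full $N_0$-propagation, and use safeness (Lemma~\ref{l-easy}) for both directions of correctness. The one place you diverge is the no-case: you take $N_0=\emptyset$ and rely on the empty precolouring being a legitimate $L$-promising $\emptyset$-precolouring, whereas the paper observes that the restriction of any hypothetical $L$-respecting colouring $c^*$ to an \emph{arbitrary} set $N_0$ of size at most $p$ is an $L$-promising $N_0$-precolouring on which, by safeness, the algorithm could not have returned \texttt{no}. The paper's variant sidesteps the interpretive question you flag (whether $\emptyset$ is actually among the enumerated sets) and would survive even if only nonempty sets were considered; under a literal reading of ``each set of size at most $p$'' your version is also valid, so the difference is stylistic rather than substantive.
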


\begin{proof}
For every set $N_0\subseteq V(G)$, a full $N_0$-propagation takes polynomial time by Lemma~\ref{l-easy2}.
Then the first statement of the lemma follows from this observation and the fact that we need to perform $O(n^p)$ full $N_0$-propagations, which is a polynomial number, as $p$ is a constant.

Now suppose that $(G,L)$ is $p$-terminal. First assume that for some $N_0\subseteq V(G)$ with $|N_0|\leq c$, there exists an $L$-promising $N_0$-precolouring $c$, such that the propagation algorithm returns {\tt yes}. Then $(G,L)$ is a yes-instance due to Lemma~\ref{l-easy}.
Now assume that for every set $N_0\subseteq V(G)$ with $|N_0|\leq c$ and every  $L$-promising $N_0$-precolouring $c$, the propagation algorithm returns {\tt no}. Then $(G,L)$ is a no-instance. This follows from Lemma~\ref{l-easy} combined with the observation that if $(G,L)$ was a yes-instance, the restriction of a colouring $c$ that respects $L$ to any set $N_0$ of size at most $p$ would be an $L$-promising $N_0$-precolouring of $G$. \qed
\end{proof}

\noindent
In our next two algorithms, we perform a full $p$-propagation for some appropriate constant~$p$. If we find that an instance $(G,L)$ is $p$-terminal, then we are done by Lemma~\ref{l-easy3}. In the other case, we exploit the new information on the structure of~$G$ that we obtain from the fact that $(G,L)$ is not $p$-terminal.

\begin{theorem}\label{t-c4c8}
{\sc List $3$-Colouring} can be solved in polynomial time for $(C_4,C_8)$-free graphs of diameter~$2$.
\end{theorem}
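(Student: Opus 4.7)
The plan is to extend the technique of Theorems~\ref{t-c6} and~\ref{t-c4c7} using the full $p$-propagation of Section~\ref{s-pro2} for some constant $p \geq 6$. After the usual preliminary reductions (handle complete-bipartite $G$ directly as in Theorem~\ref{t-c5}; return a no-answer if $G$ contains a $K_4$; apply Theorem~\ref{t-c5} if $G$ is $C_5$-free), we may assume that $G$ is $K_4$-free and contains an induced $C_5$ on vertices $\{x_1,\ldots,x_5\}$. We apply the full $p$-propagation, and if $(G, L)$ is $p$-terminal then Lemma~\ref{l-easy3} finishes the proof; otherwise we aim to derive a contradiction, forcing terminality.

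Assume $(G, L)$ is not $p$-terminal. Taking $N_0 = V(C_5)$ and the colouring $c$ with $c(x_1) = 1$, $c(x_2) = c(x_4) = 2$, $c(x_3) = c(x_5) = 3$ (other $L$-promising colourings being symmetric), some $v \in N_2$ retains $L'_c(v) = \{1,2,3\}$ after propagation. Exactly as in the proofs of Theorems~\ref{t-c6} and~\ref{t-c4c7}, using diameter~$2$, list analysis, and $C_4$-freeness to rule out further $N_0$-adjacencies, we obtain vertices $y, z, w \in N(v) \cap N_1$ whose only $N_0$-neighbour is $x_1$, $x_3$, $x_4$ respectively, with lists $\{2,3\}$, $\{1,2\}$, $\{1,3\}$; by the same reasoning, diameter~$2$ yields further vertices $u_2, u_5 \in N(v) \cap N_1$ adjacent in $N_0$ only to $x_2$ and $x_5$, with lists $\{1,3\}$ and $\{1,2\}$.

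We first perform case analysis on the edges among $\{y,z,w\}$. If at least two of $yz, yw, zw$ are edges, then $\{v,y,z,w\}$ induces either a $K_4$ (contradicting $K_4$-freeness) or a diamond in which the two non-adjacent vertices have size-$2$ lists whose intersection is a singleton; Rule~\ref{r-diamond} followed by Rule~\ref{r-one2} then reduces $L'_c(v)$ to size at most $2$, a contradiction. Hence at most one of $yz, yw, zw$ is an edge. To handle this residual case, we use that for each $i\in\{1,\ldots,5\}$ the set $N(v)\cap N(x_i)$ is a clique (by $C_4$-freeness applied to the non-adjacent pair $v,x_i$) of size at most~$2$ (by $K_4$-freeness, since $v$ is adjacent to the clique). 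We then take $N_0' = V(C_5)\cup\{v\}$, of size~$6$, and (using $p\geq 6$) examine each precolouring that extends $c$ by a choice of $c(v) \in \{1,2,3\}$: each such colour immediately collapses the lists inside two of the cliques $N(v)\cap N(x_i)$ to a common singleton, so if any of these cliques has size $\geq 2$ then two adjacent vertices receive the same single-colour list and Rule~\ref{r-one2} produces an empty list and a no-answer from the propagation. Since $(G,L)$ is not $p$-terminal, some such extended precolouring must be ``unknown'', pinning each $N(v)\cap N(x_i)$ down to a single vertex.

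The hard part will be to exploit this now-rigid local picture around $v$ to construct an induced $C_8$, contradicting $C_8$-freeness of $G$. The natural $8$-cycle $x_1x_2x_3zvwx_4x_5$ has the $C_5$-chord $x_3x_4$, and symmetric attempts through $v$ are plagued by $v$-chords to any two of $\{y,z,w,u_2,u_5\}$ sitting non-consecutively in the cycle; the technical crux is to choose a cycle that keeps each pair of consecutive $C_5$-vertices adjacent in the cycle order and uses $v$ together with exactly two of its pinned neighbours consecutively. Using the now-established uniqueness of $z,w,u_2,u_5$, the absence of $yz$, $yw$, $zw$ edges, and the list-propagation rule-outs of any further adjacency (each of which would already have been applied by the propagation), we verify that the resulting $8$-cycle has no chord, producing the desired induced $C_8$ and the final contradiction. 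Therefore $(G,L)$ is $p$-terminal and the theorem follows from Lemma~\ref{l-easy3}.
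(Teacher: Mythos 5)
Your proposal has a genuine gap at exactly the point you flag as ``the hard part'': you never exhibit the induced $C_8$, and your choice of anchor structure makes it unlikely you can. You reduce to the case where $G$ contains an induced $C_5$ (falling back on Theorem~\ref{t-c5} when $G$ is $C_5$-free) and then try to thread an $8$-cycle through $v$ and the $C_5$. But an $8$-cycle through $v$ uses exactly two neighbours of $v$, leaving five further vertices that must induce a path; the five vertices of your $C_5$ induce a cycle, not a path, so every such candidate (e.g.\ your $x_1x_2x_3zvwx_4x_5$) inherits a chord such as $x_3x_4$. The paper sidesteps this entirely by reducing to the presence of an induced $C_6$ instead: if $G$ is $C_6$-free it applies Theorem~\ref{t-c6}, and otherwise deleting $x_2$ from the $C_6$ leaves an induced $P_5$, so replacing the subpath $x_1x_2x_3$ by $x_1\,y\,v\,y'\,x_3$ yields the induced $C_8$ (after ruling out $yy'$ via Rule~\ref{r-bull}). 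Your framework never produces this configuration, so the contradiction you promise is not established.

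There is a second structural idea you are missing. In the paper, the $C_8$-contradiction is not the end of the proof but only establishes a claim about which precolourings of an induced $C_6$ can survive propagation (namely, no two vertices at distance~$2$ on the $C_6$ may share a colour). This claim is then converted into a new, provably safe propagation rule (Rule-C6), and the final contradiction for the surviving precolouring $1,2,3,1,2,3$ is a $K_4$ on $\{v,y,y',y''\}$ obtained because each non-edge among $y,y',y''$ would create an induced $C_6$ to which Rule-C6 would already have applied. Your substitute for this second stage --- extending the precolouring to $N_0'=V(C_5)\cup\{v\}$ and arguing that the cliques $N(v)\cap N(x_i)$ collapse to single vertices --- is not carried to any contradiction, and the logical step ``since $(G,L)$ is not $p$-terminal, some such extended precolouring must be unknown'' does not follow for your particular $N_0'$ (non-terminality only guarantees an unknown outcome for \emph{some} set and \emph{some} precolouring). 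So both the forbidden-subgraph construction and the rule-derivation stage that the paper's proof rests on are absent.
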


\begin{proof}
Let $G=(V,E)$ be a $(C_4,C_8)$-free graph of diameter~$2$ with a list $3$-assignment $L$. If $G$ is $C_6$-free, then we apply Theorem~\ref{t-c6}. 
 If $G$ contains a $K_4$, then $G$ is not $3$-colourable and hence, $(G,L)$ is a no-instance of {\sc List $3$-Colouring}. We check these properties in polynomial time. So, from now on, we assume that $G$ is a $K_4$-free graph that contains at least one induced cycle on six vertices.
 
We set $p=6$ and perform a full $p$-propagation. This takes polynomial time by Lemma~\ref{l-easy}. By the same lemma, we have solved {\sc List $3$-Colouring} on $(G,L)$ if $(G,L)$ is $p$-terminal. Suppose we find that $(G,L)$ is not $p$-terminal.

We first prove the following claim.

\medskip
\noindent
{\it Claim 1. For each induced $6$-vertex cycle $C$, the propagation algorithm returned {\tt no} for every $V(C)$-promising colouring $c$ that assigns the same colour $i$ on two vertices of $C$ that have a common neighbour on $C$.}

\medskip
\noindent
We prove Claim~1 as follows.
Consider an induced $6$-vertex cycle~$C$, say with vertex set $N_0=\{x_1,\ldots,x_6\}$ in this order. Let $N_1$ be the set of vertices that do not belong to $C$ but that are adjacent to at least one vertex of $C$. Let $N_2=V\setminus (N_0\cup N_1)$  be the set of remaining vertices.
For contradiction, let $c$ be a $V(C)$-promising colouring that assigns two vertices of $C$ with a common neighbour on $C$ the same colour, say $c(x_1)=1$ and $c(x_3)=1$, such that a full $c$-propagation does not yield a {\tt no} output. As $(G,L)$ is not $p$-terminal, this means
that we obtained the $c$-propagated list assignment $L'_c$. By definition of $L'_c$ we find that $G$ contains  
a vertex $v$ with $L'_c(v)=\{1,2,3\}$. Then $v\notin N_0$, as every $u\in N_0$ has $L'_c(u)=\{c(u)\}$. 
Moreover, $v\notin N_1$, as vertices in $N_1$ have a list of size at most~$2$ after applying Rule~\ref{r-one2}.
Hence, we find that $v\in N_2$. 

As $G$ has diameter~$2$, there exist a vertex $y\in N_1$ that is adjacent to both~$v$ and~$x_1$.
As $c(x_1)=1$, we find that $c(x_2)\in \{2,3\}$ and $c(x_6)\in \{2,3\}$.
As $c(x_3)=1$, we find that $c(x_4)\in \{2,3\}$. Hence, $y$ is not adjacent to any vertex of $\{x_2,x_4,x_6\}$; 
otherwise $y$ would have a list of size~$1$ due to Rule~\ref{r-one2}, and by the same rule, $v$ would have a list of size~$2$.
We note that $y$ is not adjacent to $x_3$  
or $x_5$ either, as otherwise $\{x_1,x_2,x_3,y\}$ or $\{x_1,x_6,x_5,y\}$ induces a $C_4$, contradicting the $C_4$-freeness of $G$.

As $G$ has diameter~$2$ and $yx_3\notin E$, there exists a vertex $y'\in N_1\setminus \{y\}$ that is adjacent to both $v$ and $x_3$. By the same arguments as above, $y'$ is not adjacent to any vertex of $\{x_1,x_2,x_4,x_5,x_6\}$.
If $y$ and $y'$ are adjacent, then $v$ would have list $\{1\}$ due to Rule~\ref{r-bull}.
Hence $y$ and $y'$ are not adjacent. However, we now find that $\{x_1,y,v,y',x_3,x_4,x_5,x_6\}$ induces a $C_8$, contradicting the $C_8$-freeness of $G$; see also Figure~\ref{f-c8}. This proves Claim~1.

\begin{figure}
	\resizebox{9.5cm}{!}{
	\begin{tikzpicture}[main_node/.style={circle,draw,minimum size=1cm,inner sep=3pt]}]
	
	\node[main_node](v) at (0,0){\LARGE$v$};
	\node[main_node](y_1) at (0,2){\LARGE$y$};
	\node[main_node](y_2) at (2,2){\LARGE$y^{\prime}$};
	\node[main_node](x_1) at (0,4){\LARGE$x_1$};
	\node[main_node](x_2) at (2,4){\LARGE$x_2$};
	\node[main_node](x_3) at (4,4){\LARGE$x_3$};
	\node[main_node](x_4) at (6,4){\LARGE$x_4$};
	\node[main_node](x_5) at (8,4){\LARGE$x_5$};
	\node[main_node](x_6) at (10,4){\LARGE$x_6$};
	
	\draw[thick](x_1)--(x_2)--(x_3)--(x_4)--(x_5)--(x_6);
	\draw[thick](v)--(y_1);
	\draw[thick](v)--(y_2);
	\draw[thick](y_1)--(x_1);
	\draw[thick](y_2)--(x_3);
	\draw[thick](x_1) to[out=60, in=120] (x_6);
	\draw[thick, dash dot] (-4,1)--(10,1);
	\draw[thick, dash dot] (-4,3)--(10,3);
	\draw[thick, dash dot] (-3, 5)--(-3, -1 );
	\node(x_1list) at (0,5){\LARGE$\{1\}$};
	\node(x_3list) at (4,5) {\LARGE$\{1\}$};
	\node(x_2list) at (2,5) {\LARGE$ \subset \{2,3\}$};
	\node(x_4list) at (5.8,5) {\LARGE$ \subset \{2,3\}$};
		\node(x_5list) at (8,5) {\LARGE$ \subset \{1,2,3\}$};
	\node(x_6list) at (11,5) {\LARGE$\subset \{2,3\}$};
	\end{tikzpicture}
}
	\caption{The situation that is described in Claim 1 in the proof of Theorem~\ref{t-c4c8}: the set $\{x_1,y,v,y',x_3,x_4,x_5,x_6\}$ induces a $C_8$, which is not possible.}\label{f-c8}
	\end{figure}
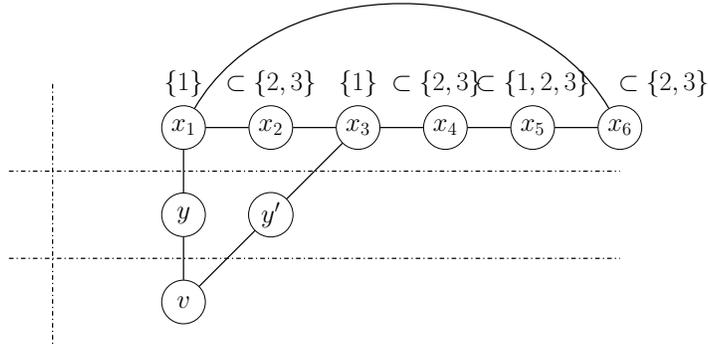

\medskip
\noindent
Due to Claim~1, we know that if $G$ has a colouring~$c$ respecting $L$, then any such colouring~$c$ gives a different colour to every two non-adjacent vertices that are of distance~$2$ on some induced $6$-vertex cycle. Hence, we can safely use the following new rule. To explain this, $x_5$ cannot get the same colour of both $x_1$ and $x_3$, which are both of distance~$2$ from $x_5$ on an induced $C_6$, thus $x_5$ must get the remaining colour, which is the colour of $x_2$. Moreover, an application of the new rule takes polynomial time.
Note that we must also have that $L(x_4)=L(x_1)$ and $L(x_6)=L(x_3)$ but this will be irrelevant for our purposes.

\begin{enumerate}[\bf Rule-C6.]
	\item \label{r-c6} {\bf(${\mathbf {C_6}}$ colour propagation)} 
	Let $C$ be an induced cycle on six vertices $x_1,x_2,\ldots,x_6$ in that order. If $|L(x_1)|=|L(x_2)|=|L(x_3)|=1$, $L(\{x_1,x_2,x_3\})=\{1,2,3\}$ and $L(x_2)\neq L(x_5)$, then set $L(x_5):=L(x_2)\cap L(x_5)$ (so $x_5$ gets a list of size at most~$1$).\\[-12pt]
\end{enumerate}

\noindent
\begin{figure}
	\resizebox{9.5cm}{!}{
	\begin{tikzpicture}[main_node/.style={circle,draw,minimum size=1cm,inner sep=3pt]}]
	
	\node[main_node](v) at (0,0){\LARGE$v$};
	\node[main_node](y_1) at (0,2){\LARGE$y$};
	\node[main_node](y_2) at (2,2){\LARGE$y^{\prime}$};
	\node[main_node](y_3) at (5,2){\LARGE$y^{\prime \prime}$};
	\node[main_node](x_1) at (0,4){\LARGE$x_1$};
	\node[main_node](x_2) at (2,4){\LARGE$x_2$};
	\node[main_node](x_3) at (4,4){\LARGE$x_3$};
	\node[main_node](x_4) at (6,4){\LARGE$x_4$};
	\node[main_node](x_5) at (8,4){\LARGE$x_5$};
	\node[main_node](x_6) at (10,4){\LARGE$x_6$};
	
	\draw[thick](x_1)--(x_2)--(x_3)--(x_4)--(x_5)--(x_6);
	\draw[thick](v)--(y_1);
	\draw[thick](v)--(y_2);
	\draw[thick](y_1)--(x_1);
	\draw[thick](y_2)--(x_3);
	\draw[thick](x_1) to[out=60, in=120] (x_6);
	\draw[thick](y_1)--(y_2)--(y_3);
	\draw[thick](y_1) to[out=25, in=155] (y_3);
	\draw[thick](y_2)--(x_5);
	\draw[thick](y_3)--(v);
		\draw[thick](y_3)--(x_5);
	\draw[thick, dash dot] (-4,1)--(10,1);
	\draw[thick, dash dot] (-4,3)--(10,3);
	\draw[thick, dash dot] (-3, 5)--(-3, -1 );
	\node(x_1list) at (0,5){\LARGE$\{1\}$};
	\node(x_3list) at (4,5) {\LARGE$\{3\}$};
	\node(x_2list) at (2,5) {\LARGE$\{2\}$};
	\node(x_4list) at (6,5) {\LARGE$\{1\}$};
	\node(x_6list) at (10.5,5) {\LARGE$\{3\}$};
	\node(x_5list) at (8,5) {\LARGE$\{2\}$};
	\end{tikzpicture}
}
	\caption{The situation in the proof of Theorem~\ref{t-c4c8}, where a vertex $v\in N_2$ still has a list of three available colours after a full propagation including Rule-C6: we show that in this case $G$ contains a $K_4$, namely on vertices $v$, $y$, $y'$, $y''$, a contradiction.}\label{f-22}
\end{figure}
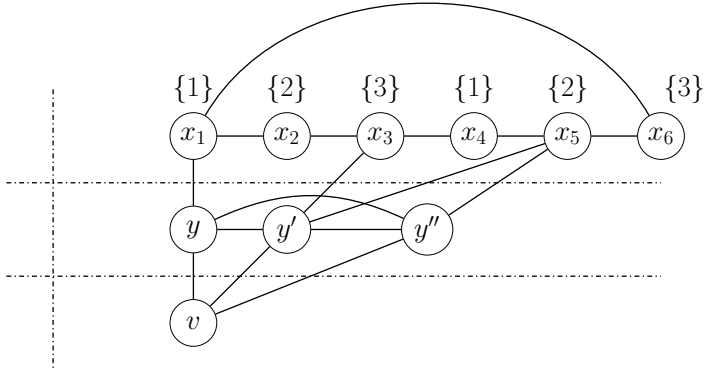
\noindent
We can now do as follows. Consider an induced $6$-vertex cycle~$C$ in $G$, say on vertices $x_1,\ldots,x_6$ in that order.
Then we may assume without loss of generality that if $G$ has a colouring $c$ that respects $L$, then $c(x_1)=1$, $c(x_2)=2$, $c(x_3)=3$, $c(x_4)=1$, $c(x_5)=2$ and $c(x_6)=3$ (otherwise we can do some permutation of the colours). See also Figure~\ref{f-22}.

We let again $N_0=\{x_1,\ldots,x_6\}$, $N_1$ be the set of vertices that do not belong to $C$ but that are adjacent to at least one vertex of $C$, and $N_2=V\setminus (N_0\cup N_1)$  be the set of remaining vertices. We  do a full $c$-propagation but now we also include the exhaustive use of Rule-C6. By combining Lemma~\ref{l-easy} with the observation that Rule-C6  runs in polynomial time and reduces the list size of at least one vertex, this takes polynomial time.
By combining the same lemma with the fact that Rule-C6 is safe (due to Claim~1) and the above observation that every $L$-respecting colouring of $G$ coincides with $c$ on $N_0$ (subject to colour permutation), we are done if we can prove that the propagation algorithm either outputs {\tt yes} or {\tt no}.

For contradiction, assume that the propagation algorithm returns {\tt unknown}. Then we obtained the $c$-propagated list assignment $L'_c$. By definition of $L'_c$ we find that $G$ contains a a vertex $v$ with $L'_c(v)=\{1,2,3\}$. Then $v\notin N_0$, as every $u\in N_0$ has $L'_c(u)=\{c(u)\}$. Moreover, $v\notin N_1$, as vertices in $N_1$ have a list of size at most~$2$ after applying Rule~\ref{r-one2}.
Hence, we find that $v\in N_2$. 

As $G$ has diameter~$2$, there exists a vertex $y\in N_1$ that is adjacent to $x_1$ and~$v$.  Hence, $y$ is not adjacent to any vertex in $\{x_2,x_3,x_5,x_6\}$;
otherwise $y$ would have a list of size~$1$ due to Rule~\ref{r-one2}, and by the same rule, $v$ would have a list of size~$2$.
As $G$ has diameter~$2$ and $yx_3\notin E$, there exists a vertex $y'\in N_1\setminus \{y\}$ that is adjacent to $x_3$ and~$v$. By the same arguments as above, $y'$ is not adjacent to any vertex in $\{x_1,x_2,x_4,x_5\}$. 
If $yy'\notin E$, then $\{x_1,x_2,x_3,y',v,y\}$ induces a $C_6$. However, in that case we would have applied Rule-C6 and $v$ would have had list $\{2\}$. Hence, we find that $y$ and $y'$ are adjacent; see also Figure~\ref{f-22}.

As $G$ has diameter~$2$, $yx_5\notin E$ and $y'x_5\notin E$, there exists a vertex $y''\in N_1\setminus \{y,y'\}$ that is adjacent to $x_5$ and~$v$. By using exactly the same arguments as above but now applied to $y''$ and to the pairs $(y,y'')$ and $(y',y'')$, respectively, we find that $y''$ is adjacent to both $y$ and $y'$.
However, now the vertices $v,y,y',y''$ induce a $K_4$, contradicting the $K_4$-freeness of $G$ (see again Figure~\ref{f-22}).
We conclude that the propagation algorithm returned either {\tt yes} or {\tt no}. \qed
\end{proof}

\begin{theorem}\label{t-c4c9}
{\sc List $3$-Colouring} can be solved in polynomial time for $(C_4,C_9)$-free graphs of diameter~$2$.
\end{theorem}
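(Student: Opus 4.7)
My plan is to mimic the structure of the proof of Theorem~\ref{t-c4c8}, using an induced $7$-cycle and $C_9$-freeness in place of an induced $6$-cycle and $C_8$-freeness. First, I would dispose of the easy subcases: if $G$ is $C_7$-free, apply Theorem~\ref{t-c4c7}; if $G$ is $C_8$-free, apply Theorem~\ref{t-c4c8}; and if $G$ contains a $K_4$, output~\texttt{no}. Hence we may assume $G$ is a $K_4$-free, $(C_4,C_9)$-free graph of diameter~$2$ that contains an induced $7$-cycle. Set $p=7$ and perform a full $p$-propagation; by Lemma~\ref{l-easy3}, it suffices to treat the case where $(G,L)$ is not $p$-terminal.

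The central step is an analogue of Claim~1 from the proof of Theorem~\ref{t-c4c8}: for every induced $7$-cycle $C$ on $x_1,\ldots,x_7$ (in cyclic order) and every $V(C)$-promising precolouring $c$ assigning the same colour to two vertices at distance~$2$ on $C$, say $c(x_1)=c(x_3)=1$, the propagation algorithm returns \texttt{no}. Assuming otherwise, the surviving vertex $v\in N_2$ with $L'_c(v)=\{1,2,3\}$ yields, by diameter~$2$, vertices $y,y'\in N_1$ adjacent to $\{v,x_1\}$ and $\{v,x_3\}$ respectively, both with lists $\{2,3\}$. The $C_4$-free condition (via the common cycle neighbour $x_2$) gives $y\not\sim x_3$ and $y'\not\sim x_1$; Rule~\ref{r-one2} together with the list constraints excludes any adjacency between $y,y'$ and a cycle vertex coloured~$2$ or~$3$; and Rule~\ref{r-bull} applied to the bull induced by $\{v,y,y',x_1,x_3\}$ excludes the edge $yy'$. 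Consequently $\{x_1,y,v,y',x_3,x_4,x_5,x_6,x_7\}$ induces a $C_9$, contradicting the $C_9$-freeness of $G$.

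From this claim I would extract a new \emph{Rule-C7} propagation step (analogous to Rule-C6 in the proof of Theorem~\ref{t-c4c8}) that forces list reductions on induced $7$-cycles. Performing the full propagation with Rule-C7 included, I would finish by mirroring the $K_4$-derivation argument of Theorem~\ref{t-c4c8}: any surviving $v\in N_2$ with $L'_c(v)=\{1,2,3\}$ admits, by diameter~$2$, three distinct neighbours $y,y',y''\in N_1$ joined to cycle vertices of all three colours; pairwise non-adjacency among them would create a forbidden induced $7$-cycle via Rule-C7, so they must be pairwise adjacent, and together with $v$ they form a $K_4$, contradicting the $K_4$-freeness of $G$.

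The main obstacle is that $C_7$ is odd: the assumption $c(x_1)=c(x_3)$ does not pin down the colours of $x_4,\ldots,x_7$ as it would for the bipartite $C_6$, so a cycle vertex such as $x_5$ may also be coloured~$1$, and then neither the list-singleton rule nor $C_4$-freeness alone rules out a chord like $y\sim x_5$ in the $9$-vertex cycle construction. Handling this likely requires either enlarging $N_0$ slightly beyond $V(C)$ by precolouring a few carefully chosen neighbours (keeping $|N_0|$ constant) so that extra singleton lists block every potential chord, or performing additional case analysis on the colour pattern along $C$, re-routing through chords to produce alternative forbidden induced cycles.
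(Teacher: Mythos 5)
Your outline follows the paper's strategy at a high level, and you have correctly located the difficulty: your version of Claim~1 (for \emph{every} $V(C)$-precolouring with $c(x_1)=c(x_3)$) is false as stated, because on an odd cycle the repeated colour can occur a third time, e.g.\ $c(x_5)=1$, and then nothing stops $y$ from being adjacent to $x_5$, destroying the induced $C_9$. But identifying the obstacle is not the same as overcoming it, and the two escape routes you gesture at are left entirely unexecuted. The paper resolves this by \emph{weakening} Claim~1: it only asserts a \texttt{no}-answer for precolourings in which colour $i$ appears on $x_1$ and $x_3$ and on \emph{no other} vertex of $C$. With that extra hypothesis your $C_9$ argument goes through verbatim, but the payoff is a correspondingly weaker Rule-C7 (roughly: if $x_1,x_2,x_3,x_4$ have singleton lists realising all three colours with $L(x_4)=L(x_2)$, then $L(x_1)$ can be removed from $L(x_6)$), and the endgame then requires a case split over the two possible colour patterns of an induced $C_7$ up to permutation: $(1,2,3,2,3,2,3)$ or $(1,2,3,1,3,2,3)$.

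Your proposed endgame --- three neighbours $y,y',y''$ of $v$ that must be pairwise adjacent, yielding a $K_4$ --- does not survive this weakening and is in fact wrong for $C_7$. In the paper's second case a neighbour $y$ of $v$ attached to $x_3$ can also be adjacent to $x_7$ (both coloured $3$), so the $9$-cycle through $y,v,y'$ is broken by the chord $yx_7$ rather than by the edge $yy'$; the vertices $y$ and $y'$ remain non-adjacent and no $K_4$ ever appears. The actual contradictions come from applying Rule-C7 to newly found induced $7$-cycles such as $\{x_1,x_7,x_6,x_5,x_4,y',y\}$ or $\{x_6,x_5,x_4,x_3,y,v,z\}$ to force a list reduction on $v$, which is a genuinely different and more delicate mechanism than the $K_4$ argument you imported from the $(C_4,C_8)$ case. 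As it stands, the proposal is a plan with its hardest two steps (the correct formulation of the claim/rule, and the case analysis that replaces the $K_4$ finish) missing.
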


\begin{proof}
Let $G=(V,E)$ be a $(C_4,C_9)$-free graph of diameter~$2$ with a list $3$-assignment $L$. If $G$ is $C_7$-free, then we apply Theorem~\ref{t-c4c7}. 
 If $G$ contains a $K_4$, then $G$ is not $3$-colourable and hence, $(G,L)$ is a no-instance of {\sc List $3$-Colouring}. We check these properties in polynomial time. So, from now on, we assume that $G$ is a $K_4$-free graph that contains at least one induced cycle on seven vertices.
 
We set $p=7$ and perform a full $p$-propagation. This takes polynomial time by Lemma~\ref{l-easy}. By the same lemma, we have solved {\sc List $3$-Colouring} on $(G,L)$ if $(G,L)$ is $p$-terminal. Suppose we find that $(G,L)$ is not $p$-terminal.

We first prove the following claim.

\medskip
\noindent
{\it Claim 1. For each induced $7$-vertex cycle $C$, the propagation algorithm returned {\tt no} for every $L$-promising $V(C)$-colouring $c$ that 
assigns the same colour $i$ on two vertices of $C$ that have a common neighbour on $C$ and that gives every other vertex of $C$ a colour different from~$i$.}

\medskip
\noindent
We prove Claim~1 as follows.
Consider an induced $7$-vertex cycle~$C$, say with vertex set $N_0=\{x_1,\ldots,x_7\}$ in this order. Let $N_1$ be the set of vertices that do not belong to $C$ but that are adjacent to at least one vertex of $C$. Let $N_2=V\setminus (N_0\cup N_1)$  be the set of remaining vertices.
Let $c$ be an $L$-promising $V(C)$-colouring that assigns two vertices of $C$ with a common neighbour on $C$ the same colour, say $c(x_1)=1$ and $c(x_3)=1$, and moreover, that assigns every vertex $x_i$ with $i\in \{2,4,5,6,7\}$ colour
 $c(x_i)\neq 1$.

For contradiction, suppose that a full $c$-propagation does not yield a {\tt no} output. As $(G,L)$ is not $p$-terminal, this means
that we obtained the $c$-propagated list assignment $L'_c$. By definition of $L'_c$ we find that $G$ contains a 
vertex $v$ with $L'_c(v)=\{1,2,3\}$. Then $v\notin N_0$, as every $u\in N_0$ has $L'_c(u)=\{c(u)\}$. 
Moreover, $v\notin N_1$, as vertices in $N_1$ have a list of size at most~$2$ after applying Rule~\ref{r-one2}.
Hence, we find that $v\in N_2$. 

As $G$ has diameter~$2$, there exist a vertex $y\in N_1$ that is adjacent to both $v$ and $x_1$.
Then $y$ is not adjacent to any $x_i$ with $i\in \{2,4,5,6,7\}$; in that case $y$ would have a list of size~$1$ (as each $x_i$ other than $x_1$ and $x_3$ is coloured $2$ or $3$) meaning that $L_c'(v)$ would have size at most~$2$. Hence, $y$ is not adjacent to $x_3$ either, as otherwise $\{y,x_1,x_2,x_3\}$ would induce a $C_4$. As $G$ has diameter~$2$, this means that there exists a vertex $y'\in N_1$ with $y'\neq y$ such that $y'$ is adjacent to both $v$ and $x_3$. By the same arguments we used for $y'$, we find that $x_3$ is the only neighbour of $y'$ on $C$.

If $yy'$ is an edge then, by Rule~\ref{r-bull}, $v$ would have had list $\{1\}$ instead of $\{1,2,3\}$. Hence, $y$ and $y'$ are not adjacent. However, now $\{y,v,y',x_3,x_4,x_5,x_6,x_7,x_1\}$ induces a $C_9$, a contradiction; see also Figure~\ref{f-ccc1}. This proves Claim~1.

\begin{figure}[h]
	\resizebox{10.5cm}{!}{
	\begin{tikzpicture}[main_node/.style={circle,draw,minimum size=1cm,inner sep=3pt]}]
	
	\node[main_node](v) at (0,0){\LARGE$v$};
	\node[main_node](y_1) at (0,2){\LARGE$y$};
	\node[main_node](y_2) at (2,2){\LARGE$y^{\prime}$};
	\node[main_node](x_1) at (0,4){\LARGE$x_1$};
	\node[main_node](x_2) at (2,4){\LARGE$x_2$};
	\node[main_node](x_3) at (4,4){\LARGE$x_3$};
	\node[main_node](x_4) at (6,4){\LARGE$x_4$};
	\node[main_node](x_5) at (8,4){\LARGE$x_5$};
	\node[main_node](x_6) at (10,4){\LARGE$x_6$};
	\node[main_node](x_7) at (12,4){\LARGE$x_7$};
	
	\draw[thick](x_1)--(x_2)--(x_3)--(x_4)--(x_5)--(x_6)--(x_7);
	\draw[thick](v)--(y_1);
	\draw[thick](v)--(y_2);
	\draw[thick](y_1)--(x_1);
	\draw[thick](y_2)--(x_3);
	\draw[thick](x_1) to[out=60, in=120] (x_7);
	\draw[thick, dash dot] (-4,1)--(12.5,1);
	\draw[thick, dash dot] (-4,3)--(12.5,3);
	\draw[thick, dash dot] (-3, 5)--(-3, -1 );
	\node(x_1list) at (0,5){\LARGE$\{1\}$};
	\node(x_3list) at (4,5) {\LARGE$\{1\}$};
	\node(x_2list) at (2,5) {\LARGE$ \subset \{2,3\}$};
	\node(x_4list) at (6,5) {\LARGE$ \subset \{2,3\}$};
	\node(x_5list) at (8,5) {\LARGE $\subset \{2,3\}$};
	\node(x_6list) at (10,5) {\LARGE$\subset \{2,3\}$};
	\node(x_7list) at (12.8,5) {\LARGE $\subset \{2,3\}$};
	\end{tikzpicture}
}
	\caption{The situation that is described in Claim~1 in the proof of Theorem \ref{t-c4c9}. The set $\{x_1,y,v,y^{\prime}, x_3, x_4, x_5, x_6, x_7\}$ induces a $C_9$, which is not possible.}\label{f-ccc1}
	\end{figure}

\medskip
\noindent
Claim~1 tells us that if $G$ has a colouring~$c$ respecting $L$, then~$c$ only gives the same colour to two vertices $x$ and $x'$ that are of distance~$2$ on some induced $7$-vertex cycle $C$ if there is a third vertex $x''$ that is of distance~$2$ from either $x$ or $x'$ on $C$ with $c(x'')=c(x')=c(x)$.
Hence, we can safely use the following new rule, whose execution takes polynomial time (in this rule,  $c(x_1)=c(x_6)$ is not possible: view $x_1$ as $x$ and $x_6$ as $x'$ and note that $x''$ can neither be $x_3$ or $x_4$).

\begin{enumerate}[\bf Rule-C7.]
	\item \label{r-c7} {\bf(${\mathbf {C_7}}$ colour propagation)} 
	Let $C$ be an induced cycle on seven vertices $x_1,x_2,\ldots,x_7$ in that order. If $|L(x_i)|=1$ for $i\in \{1,2,3,4\}$,  $L(\{x_1,x_2,x_3\})=\{1,2,3\}$, $L(x_4)=L(x_2)$, and $L(x_1)\subseteq L(x_6)$, then set $L(x_6):=\{1,2,3\}\setminus L(x_1)$ (so $L(x_6)$ gets size at most~$2$).\\[-12pt]
\end{enumerate}

\noindent
We now consider an induced $7$-vertex cycle~$C$ in $G$, say on vertices $x_1,\ldots,x_7$ in that order.
Then either one colour appear once on $C$, or two colours appear exactly twice on $C$, with distance~$3$ from each other on $C$.
Hence, we may assume without loss of generality that if $G$ has a colouring $c$ that respects $L$, then one of the following holds for such a colouring $c$ (see also Figures~\ref{f-one} and~\ref{f-two}):
\begin{itemize}
\item [(1)]  $c(x_1)=1$, $c(x_2)=2$, $c(x_3)=3$, $c(x_4)=2$, $c(x_5)=3$, $c(x_6)=2$, $c(x_7)=3$; or
\item [(2)]  $c(x_1)=1$, $c(x_2)=2$, $c(x_3)=3$, $c(x_4)=1$, $c(x_5)=3$, $c(x_6)=2$, $c(x_7)=3$.
\end{itemize}
We let again $N_0=\{x_1,\ldots,x_7\}$, $N_1$ be the set of vertices that do not belong to~$C$ but that are adjacent to at least one vertex of~$C$, and $N_2=V\setminus (N_0\cup N_1)$  be the set of remaining vertices. We  do a full $c$-propagation but now we also include the exhaustive use of  Rule-C7. By combining Lemma~\ref{l-easy} with the observation that Rule-C7  runs in polynomial time and reduces the list size of at least one vertex, this takes polynomial time.
By combining the same lemma with the fact that Rule-C7 is safe (due to Claim~1) and the above observation that every $L$-respecting colouring of $G$ coincides with $c$ on $N_0$ (subject to colour permutation), we are done if we can prove that the propagation algorithm either outputs {\tt yes} or {\tt no}. We show that this is the case for each of the two possibilities (1) and (2) of $c$.

For contradiction, assume that the propagation algorithm returns {\tt unknown}. Then we obtained the $c$-propagated list assignment $L'_c$. By definition of $L'_c$ we find that $G$ contains a vertex $v$ with $L'_c(v)=\{1,2,3\}$. Then $v\notin N_0$, as every $u\in N_0$ has $L'_c(u)=\{c(u)\}$. Moreover, $v\notin N_1$, as vertices in $N_1$ have a list of size at most~$2$ after applying Rule~\ref{r-one2}.
Hence, we find that $v\in N_2$. We now need to distinguish between the two possibilities of $c$.

\medskip
\noindent
{\bf Case 1} $c(x_1)=1$, $c(x_2)=2$, $c(x_3)=3$, $c(x_4)=2$, $c(x_5)=3$, $c(x_6)=2$,~$c(x_7)=3$\\
As $G$ has diameter $2$, there exists a vertex $y\in N_1$ that is adjacent to $x_1$ and~$v$.  Hence, $y$ is not adjacent to any vertex in 
$\{x_2,\ldots,x_7\}$; 
otherwise $y$ would have a list of size~$1$ due to Rule~\ref{r-one2}, and by the same rule, $v$ would have a list of size~$2$.
As $G$ has diameter $2$, there exists a vertex $y'\in N_1$ that is adjacent to $x_4$ and~$v$. By the same arguments as above, $y'$ is not adjacent to any vertex of $\{x_1,x_3,x_5,x_7\}$. The latter, together with the $C_4$-freeness of $G$, implies that $y'$ is not adjacent to $x_2$ and $x_6$ either.

First suppose that $yy'\in E$. Then $\{x_1,x_7,x_6,x_5,x_4,y',y\}$ induces a $C_7$; see also Figure~\ref{f-one}.
As $c(x_1)=1$, $c(x_7)=3$, $c(x_6)=2$ and $c(x_5)=3$,
we find that $L_c(\{x_1,x_7,x_6\})=\{1,2,3\}$ and $L_c(x_5)=L_c(x_7)$. Then $1\notin L_c(y')$, as otherwise
the propagation algorithm would have applied Rule-C7. Moreover, $2\notin L_c(y')$, as otherwise the propagation algorithm would have applied Rule~\ref{r-one2}. Hence, $L_c(y')=\{3\}$. However, then $|L_c(v)|\leq 2$, again due to Rule~\ref{r-one2}, a contradiction.

Now suppose that $yy'\notin E$. Then $\{x_1,x_2,x_3,x_4,y',v,y\}$ induces a $C_7$. As $c(x_1)=1$, $c(x_2)=2$, $c(x_3)=3$, $c(x_4)=2$,
we find that $L_c(\{x_1,x_2,x_3\})=\{1,2,3\}$ and $L_c(x_4)=L_c(x_2)$. Then $1\notin L_c(v)$ due to Rule-C7. This is a contradiction, as we assumed $L_c(v)=\{1,2,3\}$.
We conclude that the propagation algorithm returned either {\tt yes} or {\tt no}. 

\begin{figure}
	\resizebox{9.5cm}{!}{
	\begin{tikzpicture}[main_node/.style={circle,draw,minimum size=1cm,inner sep=3pt]}]
	
	\node[main_node](v) at (0,0){\LARGE$v$};
	\node[main_node](y_1) at (0,2){\LARGE$y$};
	\node[main_node](y_2) at (2,2){\LARGE$y^{\prime}$};
	\node[main_node](x_1) at (0,4){\LARGE$x_1$};
	\node[main_node](x_2) at (2,4){\LARGE$x_2$};
	\node[main_node](x_3) at (4,4){\LARGE$x_3$};
	\node[main_node](x_4) at (6,4){\LARGE$x_4$};
	\node[main_node](x_5) at (8,4){\LARGE$x_5$};
	\node[main_node](x_6) at (10,4){\LARGE$x_6$};
	\node[main_node](x_7) at (12,4){\LARGE$x_7$};
	
	\draw[thick](x_1)--(x_2)--(x_3)--(x_4)--(x_5)--(x_6)--(x_7);
	\draw[thick](v)--(y_1);
	\draw[thick](v)--(y_2);
	\draw[thick](y_1)--(x_1);
	\draw[dashed](y_1)--(y_2);
	\draw[thick](y_2)--(x_4);
	\draw[thick](x_1) to[out=40, in=140] (x_7);
	\draw[thick, dash dot] (-4,1)--(12.5,1);
	\draw[thick, dash dot] (-4,3)--(12.5,3);
	\draw[thick, dash dot] (-3, 5)--(-3, -1 );
	\node(x_1list) at (0,5){\LARGE$\{1\}$};
	\node(x_3list) at (4,5) {\LARGE$\{3\}$};
	\node(x_2list) at (2.2,5) {\LARGE$\{2\}$};
	\node(x_4list) at (6,5) {\LARGE$ \{2\}$};
	\node(x_5list) at (8,5) {\LARGE $\{3\}$};
	\node(x_6list) at (9.8,5) {\LARGE$\{2\}$};
	\node(x_7list) at (12,5) {\LARGE $\{3\}$};
	\end{tikzpicture}
}
	\caption{The situation that is described in Case 1 in the proof of Theorem \ref{t-c4c9}. If the edge~$yy^{\prime}$ exists, then  $\{x_1, x_7, x_6, x_5, x_4, y^{\prime}, y\}$ induces a $C_7$ to which Rule-C7 should have been applied. Otherwise the vertices $\{x_1, x_2, x_3, x_4, y^{\prime}, v, y\}$ induce such a $C_7$.}\label{f-one}
\end{figure}
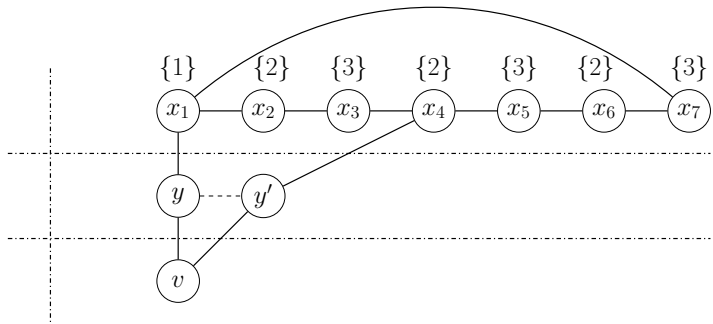

\medskip
\noindent
{\bf Case~2}~$c(x_1)=1$,~$c(x_2)=2$,~$c(x_3)=3$,~$c(x_4)=1$,~$c(x_5)=3$,~$c(x_6)=2$,~$c(x_7)=3$\\
As $G$ has diameter $2$, there is a vertex $y\in N_1$ adjacent to $x_3$ and~$v$.  Hence, $y$ is not adjacent to any vertex in
$\{x_1,x_2,x_4,x_6\}$; otherwise $y$ would have a list of size~$1$ due to Rule~\ref{r-one2}, and by the same rule, $v$ would have a list of size~$2$.
As $yx_4\notin E$, we find that $yx_5\notin E$ either; otherwise $\{y,x_3,x_4,x_5\}$ induces a $C_4$.
As $G$ has diameter $2$, this means there is a vertex $y'\in N_1\setminus \{y\}$ adjacent to $x_5$ and~$v$. By the same arguments as above, $y'$ is not adjacent to any vertex of $\{x_1,x_2,x_4,x_6\}$. As $G$ is $C_4$-free, the latter implies that $y'x_3\notin E$ and $y'x_7\notin E$.

\begin{figure}[h]
\resizebox{9.5cm}{!}{
\begin{tikzpicture}[main_node/.style={circle,draw,minimum size=1cm,inner sep=3pt]}]
\node[main_node](v) at (8,0){\Large$v$};
\node[main_node](y_1) at (6,2){\LARGE$y$};
\node[main_node](y_2) at (8,2){\LARGE$y^{\prime}$};
\node[main_node](x_1) at (0,4){\LARGE$x_1$};
\node[main_node](x_2) at (2,4){\LARGE$x_2$};
\node[main_node](x_3) at (4,4){\LARGE$x_3$};
\node[main_node](x_4) at (6,4){\LARGE$x_4$};
\node[main_node](x_5) at (8,4){\LARGE$x_5$};
\node[main_node](x_6) at (10,4){\LARGE$x_6$};
\node[main_node](x_7) at (12,4){\LARGE$x_7$};
\node[main_node](z) at (10,2){\LARGE$z$};
\draw[thick](x_1)--(x_2)--(x_3)--(x_4)--(x_5)--(x_6)--(x_7);
\draw[thick](v)--(y_1);
\draw[thick](v)--(y_2);
\draw[thick](y_1)--(x_3);
\draw[thick](z)--(x_6);
\draw[thick](z)--(v);
\draw[thick](y_2)--(x_5);
\draw[thick](y_1) to (x_7);
\draw[thick](x_1) to[out=40, in=140] (x_7);
\draw[thick, dash dot] (-4,1)--(12.5,1);
\draw[thick, dash dot] (-4,3)--(12.5,3);
\draw[thick, dash dot] (-3, 5)--(-3, -1 );
\node(x_1list) at (0,5){\LARGE$\{1\}$};
\node(x_3list) at (4,5) {\LARGE$\{3\}$};
\node(x_2list) at (2,5) {\LARGE$\{2\}$};
\node(x_4list) at (6,5) {\LARGE$ \{1\}$};
\node(x_5list) at (8,5) {\LARGE $\{3\}$};
\node(x_6list) at (9.8,5) {\LARGE$\{2\}$};
\node(x_7list) at (12,5) {\LARGE $\{3\}$};
\end{tikzpicture}
}
\caption{The situation that is described in Case 2 in the proof of Theorem~\ref{t-c4c9}. The set $\{x_6, x_5, x_4, x_3, y,v,z\}$ induces a $C_7$ to which Rule-C7 should have been applied.}\label{f-two}
\end{figure}
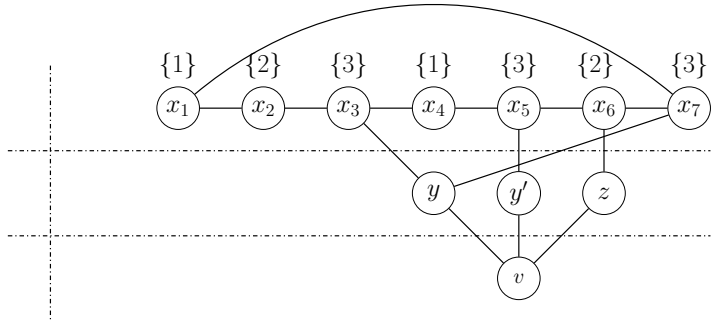

If $yy'\in E$, then $v$ would have a list of size at most~$2$ due to Rule~\ref{r-bull}. Hence $yy'\notin E$.  If $yx_7\notin E$, this means that $\{x_1,x_2,x_3,y,v,y',x_5,x_6,x_7\}$ induces a $C_9$, which is not possible. Hence, $yx_7\in E$.

To summarize, we found that $v$ has two distinct neighbours $y$ and $y'$, where $y$ has exactly two neighbours on $C$, namely $x_3$ and $x_7$, and $y'$ has exactly one neighbour on $C$, namely $x_5$. As $G$ has diameter~$2$, this means that there exists a vertex $z\in N_1$ with $z\notin \{y,y'\}$ that is adjacent to $x_6$ and~$v$. 
Then $z$ is not adjacent to any vertex of $\{x_1,x_3,x_4,x_5,x_7\}$, as otherwise $z$ would have a list of size~$1$ due to Rule~\ref{r-one2}, and by the same rule, $v$ would have a list of size~$2$. 
If $zy\in E$, then $\{y,z,x_6,x_7\}$ induces a $C_4$, which is not possible. Hence $zy\notin E$. 

From the above, we find that $\{x_6,x_5,x_4,x_3,y,v,z\}$ induces a $C_7$; see also Figure~\ref{f-two}. As $c(x_6)=2$, $c(x_5)=3$, $c(x_4)=1$ and $c(x_3)=3$,
we find that $L_c(\{x_6,x_5,x_4\})=\{1,2,3\}$ and $L_c(x_3)=L_c(x_5)$. Then $2\notin L_c(v)$, due to Rule-C7. Hence, $|L_c(v)|\leq 2$, a contradiction.
We conclude that the propagation algorithm returned either {\tt yes} or {\tt no} in Case~2 as well.\qed
\end{proof}

\section{The Proof of Theorem~\ref{t-hard}}\label{a-hard}

\noindent
In this section we prove Theorem~\ref{t-hard}, which we restate below.

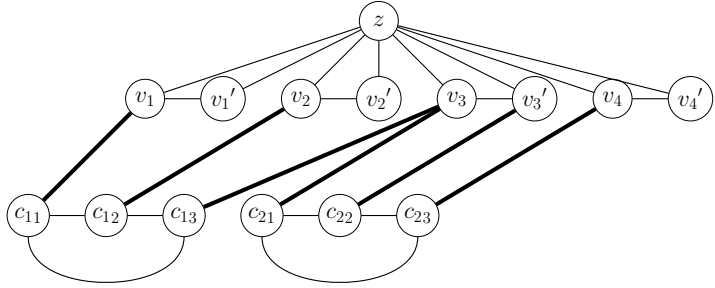
\begin{figure}[h]
\resizebox{9.5cm}{!}{
\begin{tikzpicture}[main_node/.style={circle,draw,minimum size=1cm,inner sep=3pt]}]
\node[main_node](z) at (2,0){\LARGE$z$};
\node[main_node](v_1) at (-4,-2){\LARGE$v_1$};
\node[main_node](v1') at (-2,-2){\LARGE${v_1}^{\prime}$};
\node[main_node](v_2) at (0,-2){\LARGE$v_2$};
\node[main_node](v_2') at (2,-2){\LARGE${v_2}^{\prime}$};
\node[main_node](v_3) at (4,-2) {\LARGE$v_3$};
\node[main_node](v_3') at (6,-2) {\LARGE${v_3}^{\prime}$};
\node[main_node](v4) at (8,-2) {\LARGE$v_4$};
\node[main_node](v4') at (10,-2){\LARGE${v_4}^{\prime}$};
\node[main_node](c11) at (-5,-5) {\LARGE${c_1}_2$};
\node[main_node](c12) at (-7, -5) {\LARGE${c_1}_1$};
\node[main_node](c13) at (-3,-5) {\LARGE${c_1}_3$};
\node[main_node](c21) at (1,-5) {\LARGE${c_2}_2$};
\node[main_node](c22) at (-1,-5) {\LARGE${c_2}_1$};
\node[main_node](c23) at (3,-5) {\LARGE${c_2}_3$};
\draw[thick](z)--(v4);
\draw[thick](z)--(v4');
\draw[thick](z)--(v_1);
\draw[thick](z)--(v1');
\draw[thick](z)--(v_2);
\draw[thick](z)--(v_2');
\draw[thick](z)--(v_3);
\draw[thick](z)--(v_3');
\draw[thick][-,line width=3pt](v_1)--(c12);
\draw[-,line width=3pt](v_2)--(c11);
\draw[-,line width=3pt](v_3)--(c22);
\draw[-,line width=3pt](v_3)--(c13);
\draw[-,line width=3pt](v_3')--(c21);
\draw[-,line width=3pt](v4)--(c23);
\draw[thick](c11)--(c12);
\draw[thick](c11)--(c13);
\draw[thick](c12) to [out=270, in=270](c13);
\draw[thick](c21)--(c22);
\draw[thick](c21)--(c23);
\draw[thick](c22)to [out=270, in=270](c23);
\draw[thick](v_1)--(v1');
\draw[thick](v_2)--(v_2');
\draw[thick](v_3)--(v_3');
\draw[thick](v4)--(v4');
\end{tikzpicture}
}
\caption{An example of a graph $G$ in the reduction from {\sc Not-All-Equal $3$-Satisfiability} to {\sc $3$-Colouring} with clauses $C_1=x_1 \wedge x_2 \wedge x_3$ and $C_2=x_3\wedge \neg x_3 \wedge x_4$. We obtain the graph $G'$ by subdividing the thick edges (edges between literal and clause vertices) the same number of times and connecting the newly introduced vertices to $z$.}\label{f-hardhard}
\end{figure}

\medskip
\noindent
{\bf Theorem~\ref{t-hard} (restated).}
{\it For every even integer $t\geq 6$, {\sc $3$-Colouring} is \NP-complete on the class of $(C_4,C_6,\ldots,C_t)$-free graphs of diameter~$4$.}

\begin{proof}
Note that the problem is readily seen to be in \NP.
To prove \NP-hardness we modify the standard reduction for {\sc Colouring} from the \NP-complete problem {\sc Not-All-Equal $3$-Satisfiability}~\cite{Sc78}, where each variable appears in at most three clauses. 
So, given a CNF formula $\phi$, we first construct a graph~$G$ as follows (see also Figure~\ref{f-hardhard}):

\begin{itemize}
    \item add literal vertices $v_{i}$ and  $v'_{i}$ for each variable $x_i$;
    \item add an edge between each $v_{i}$ and  $v'_{i}$;
    \item add a vertex $z$ adjacent to every $v_i$ and every $v_i'$;
    \item for each clause $C_i$ add a triangle $T_i$ with clause vertices $c_{i_1}, c_{i_2}, c_{i_3}$;
    \item fix an arbitrary order of the literals $x_{i_1}, x_{i_2}, x_{i_3}$ of $C_i$ and for $j\in \{1,2,3\}$, add the edge $v_{i_j}c_{i_j}$ if $x_{i_j}$ is positive and the edge $v_{i_j}'c_{i_j}$ if $x_{i_j}$ is negative.
\end{itemize}

\noindent
It is well known that $\phi$ has a truth assignment $\tau$ such that each clause contains at least one true literal and at least one false literal (call such a $\tau$ satisfying) if and only if $G$ has a $3$-colouring. For completeness we give a proof below.

First suppose $\phi$ has a satisfying truth assignment. Colour vertex $z$ with colour~$1$, each true literal with colour $2$ and each false literal with colour $3$. Then, as each clause has a true literal and a false literal, each triangle $T_i$ has neighbours in two different colours. Hence, we can complete the $3$-colouring.

Now suppose $G$ has a $3$-colouring. Say $z$ is assigned colour $1$. Then each literal vertex has either colour $2$ or colour $3$. Moreover, each $T_i$ must be adjacent to at least one literal vertex coloured~$2$ and to at least one literal vertex  coloured~$3$.  Hence, the truth assignment that sets literals whose vertices are coloured with colour $2$ to be true and those coloured with colour $3$ to be false is satisfying. 

As every clause vertex is adjacent to a literal vertex and literal vertices are adjacent to $z$, every vertex has distance at most $2$ from $z$. So $G$ has diameter~$4$.

We modify $G$ into a graph $G'$: for some $p\geq 0$, subdivide each edge $v_{i_j}c_{i_j}$ and each edge $v_{i_j}'c_{i_j}$ 
$p$ times and make each newly introduced vertex adjacent to $z$; see also Figure~\ref{f-hardhard}. Then $G'$ has a $3$-colouring if and only if $G$ has a $3$-colouring, as the new vertices will be alternatingly coloured by $2$ and $3$ if $z$ has colour~$1$.
Moreover, $G'$ still has diameter~$4$, and it can be readily checked that every induced cycle of~$G$ of length at most $p$ is either a $C_3$ (either a triangle $T_i$ or a triangle containing $z$) or a $C_5$ (which must contain~$z$). As we can make $p$ arbitrarily large, the result follows.\qed
\end{proof}

\section{Conclusions}\label{s-con}

We proved that {\sc $3$-Colourability} is polynomial-time solvable for several subclasses of diameter~$2$ that are characterized by forbidding one or two small induced cycles. In order to do this we used a unified framework of propagation rules, which allowed us to exploit the diameter-$2$ property of the input graph. 
Our current techniques need to be extended to obtain further results (in particular, we cannot currently handle the increasing number of different $3$-colourings of induced cycles of length larger than~$9$).

As open problems we pose:
determine the complexity of $3$-{\sc Colouring} and {\sc List $3$-Colouring} for:
\begin{itemize}
\item graphs of diameter~$2$ (which we recall is a long-standing open problem)
\item $C_t$-free graphs of diameter~$2$ for $s\in \{3,4,7,8,\ldots\}$; and 
\item 
$(C_4,C_t)$-free graphs of diameter~$2$ for $t\geq 10$.
\end{itemize} 
We also note that the complexity of $k$-{\sc Colouring} for $k\geq 4$ and {\sc Colouring} is still open for $C_3$-free graphs of diameter~$2$ (see also~\cite{MPS19}).

Finally, we turn to the class of graphs of diameter~$3$. The construction of Mertzios and Spirakis~\cite{MS16} for proving that {\sc $3$-Colouring} is \NP-complete for $C_3$-free graphs of diameter~$3$  appears to contain not only induced subdivided stars of arbitrary diameter and with an arbitrary number of leaves but also induced cycles of arbitrarily length $s\geq 4$. Hence, we pose as open problems:
determine the complexity of $3$-{\sc Colouring} and {\sc List $3$-Colouring} for $C_t$-free graphs of diameter~$3$ for $t\geq 4$
and $(C_4,C_t)$-free graphs of diameter~$3$ for $t\in \{3,5,6,\ldots\}$.

\end{document}